\newtheorem{theorem}{Theorem}[section]
\newtheorem{corollary}{Corollary}
\newtheorem*{problem}{Open Problem}
\newtheorem{definition}[theorem]{Definition}
\newtheorem{remark}{Remark}
\newcommand{\R}{\mathbb{R}}
\newcommand{\bfh}{\mathbf{h}}
\newcommand{\neigh}{N^{\bar{C}}}
\newcommand{\change}{\Delta \chi^{\bar{C}}}
\newcommand{\myparagraph}[1]{\noindent {\bf #1} }
\newcommand{\ts}{\textsuperscript}
\def\foo substitute{terrain}
\title{Euler Characteristic Surfaces}
\author{Gabriele Beltramo\ts{1}\thanks{First author contacts. Email: g.beltramo@qmul.ac.uk; Personal webpage: https://gbeltramo.github.io/} \And Rayna Andreeva\ts{2} \And Ylenia Giarratano\ts{3} \And Miguel O. Bernabeu\ts{3}  \And Rik Sarkar\ts{2} \And Primoz Skraba\ts{1} \vspace*{4mm} \\ 
\ts{1} School of Mathematical Sciences, Queen Mary University of London, London, E1 4NS, UK \\
\texttt{g.beltramo@qmul.ac.uk,p.skraba@qmul.ac.uk} \\
\ts{2} School of Informatics, University of Edinburgh, Edinburgh, EH8 9AB, UK \\
\texttt{r.andreeva@ed.ac.uk, rsarkar@inf.ed.ac.uk} \\
\ts{3} Usher Institute, University of Edinburgh, Edinburgh, EH16 4UX, UK \\
\texttt{ylenia.giarratano@ed.ac.uk, miguel.bernabeu@ed.ac.uk}
}
\begin{document}
\maketitle

\begin{abstract}
We study the use of the Euler characteristic for multiparameter topological data analysis. Euler characteristic is a classical, well-understood topological invariant that has appeared in numerous applications, including in the context of random fields. The goal of this paper is to present the extension of using the Euler characteristic in higher-dimensional parameter spaces. While topological data analysis of higher-dimensional parameter spaces using stronger invariants such as homology continues to be the subject of intense research, Euler characteristic is more manageable theoretically and computationally, and this analysis can be seen as an important intermediary step in multi-parameter topological data analysis. We show the usefulness of the techniques using artificially generated examples, and a real-world application of detecting diabetic retinopathy in retinal images. 
\end{abstract}

\keywords{Topological Data analysis \and Filtrations and Bi-filtrations \and Euler Characteristic Curves and Surfaces \and Image classification}


\section{Introduction}
\label{sec:intro}
The field of topological data analysis (TDA) has attracted a lot of research over the last few years.  I this field, the most widely used tool is persistent homology, which provides a stable summary of a space/dataset over an entire range of parameter choices/scales. The application of this technique has been largely limited  to one dimensional parameter spaces due to both theoretical and computational challenges. 

In this paper, we present an alternative approach to studying data where higher-dimensional parameter spaces are naturally present.
Rather than using homology, we opt for a simpler topological invariant -- the Euler characteristic. We obtain highly efficient algorithms to compute summaries of datasets over multiple parameters. We refer to the summaries as \emph{Euler characteristic surfaces} or \emph{Euler surfaces} for short. The goal of this paper is to illustrate that Euler surfaces \textbf{can provide insight into the data over multidimensional parameter spaces}.

The Euler characteristic $\chi$  makes an appearance in many different fields of topology and geometry including: algebraic topology~\cite{hatcher2002algebraic}, differential geometry~\cite{guillemin2010differential}, and stochastic geometry and topology~\cite{adler2009random}. It has been generalized to highly abstract settings such as enriched categories~\cite{leinster2006euler}, and can be seen as generalized measure~\cite{baryshnikov2009target}. One of the truly remarkable aspects of the Euler characteristic is that it has allows for a local description which directly enables its efficient computation. In this paper, we provide a general algorithm for two common settings in TDA and a Python package. 


The idea of topological invariant of a sequence of spaces -- e.g. as found with persistent homology -- appears in the Euler characteristic domain as the \emph{Euler characteristic curve} (ECC), and has been used for \emph{topological inference} in a range of applicaitons~\cite{penny2011statistical}. We provide a brief overview of this work in the following section, with a focus on a data-driven approach, and efficacy in real datasets.


Our main contribution is to investigate the multi-parameter setting. Beginning with the work by Carlsson and Zomorodian \cite{carlsson2009theory}, there has been a large number of approaches proposed to deal with multi-parameter persistence. Approaches include directions such as the rank invariant~\cite{carlsson2009theory,cagliari2011finiteness,chacholski2015multidimensional}, microlocal analysis~\cite{kashiwara2018persistent}, and higher-dimensional analogues of persistence diagrams~\cite{mccleary2019multiparameter,harrington2019stratifying}. These all capture related but somewhat different concepts. To the best of our knowledge, the rank invariant is the only case where implementations exist, and although they perform well, the algorithms do not scale in the same way as one-dimensional persistence. While multidimensional persistent homology remains an active area of research, we note that there is substantial evidence that in many settings -- particularly in presence of  randomness -- there is a surprisingly little loss of information in going from homology to the Euler characteristic. It has been observed that in many random models, at any given parameter, the homology of a single dimension dominates~\cite{kahle2014topology,bobrowski2020homological}. Therefore, studying the Euler characteristic at different parameter values can provide a lot of topological information. 

As illustrated by the applications of the ECC, the Euler characteristic provides a useful functional summary of data, which can readily be used for classification -- especially as closed form expressions often exist. The extension to higher-dimensional parameter spaces clearly enables more discriminative summaries. More importantly, by considering the difference of Euler characteristics, \textbf{we can  identify interesting regions of parameter space}. 

We present several instances where the ``shape" of Euler surfaces and the difference of Euler surfaces provide interesting information about the underlying generating process and the parameter space. In addition to simulated data, we also present a real-world application:  detecting diabetic retinopathy (abbreviated DR). In diabetic patients, one of the early manifestations of this disease is change of the structure of blood vessels in the retina (See Figure~\ref{fig:comp}). Accurate detection of such changes may help in early detection of the disease and prevention of significant damage. Recent work in this area has been a series of machine learning approaches aimed at automated detection of DR and other diseases from retinal images~\cite{stolte2020survey}. These works have largely used methods such as neural networks, which are accurate only with large training data volumes and are not easily interpretable. In diagnostic medicine, datasets are often small and interpretability is paramount. On multiple datasets of retinal images, we illustrate two key points. First: the ECC is already effective at detecting DR, and secondly: expanding to the multiple parameters can yield insights into the qualitative differences between the blood vessels in healthy patients and those suffering from DR. 

The overall goal of this paper is to highlight this natural extension to existing work, as we believe this may lead to an important scalable, multi-parameter technique to the TDA toolbox.  The structure of the paper follows our main contributions.  
\begin{itemize}
    \item We define the Euler surface corresponding to bi-filtrations and higher-dimensional parameter spaces, and relate to current directions in TDA (Section~\ref{sec:basics}); 
    \item Give efficient algorithms for a variety of input including cubical and simplicial complexes arising from embedded point clouds along with the a Python package for computing the Euler surfaces (Section~\ref{sec:algorithms});
    \item Show that Euler curves and surfaces are useful for a variety of classification tasks both for data generated by random models (Section~\ref{sec:experiments}), as well as real-world medical data (retinal images) (Section~\ref{sec:realworld});
    \item Most importantly show how Euler surfaces can give insight into the structure of datasets by highlighting ``interesting" areas of the parameter space (Section~\ref{sec:realworld}). 
\end{itemize}

\subsection{Related work}
\label{sec:related_work}
This work can be seen as an attempt to understand multi-parameter filtrations, which avoids the difficulties inherent in multi-parameter persistence by considering a simpler topological invariant: namely the Euler characteristic. We do not recount the numerous approaches to multi-parameter persistence here as it is not directly applicable to this work and will be obvious to experts in the field, while we certainly lose quite a bit of information in using this simpler invariant, we gain a readily computable and applicable approach to data analysis.  

The connection of Euler characteristics and data analysis goes back to the Kac-Rice formula which gives the expected number of critical points of a sufficiently nice random field~\cite{adler2009random}. This is most naturally thought of as the study of a random function on a space. Taking the sublevel/superlevel sets of the random function yields a one-dimensional filtration, which in turn for every function gives a piecewise constant integer valued function -- this is called the Euler characteristic curve. As the input is random, it is natural to take the expectation.  Due to what can be understated as fortuitous, there exists a closed-form formula for the expected Euler characteristic curve, which is called the Gaussian kinematic formula. This  applies to wide range of spaces, see  ~\cite{adler2009random} for an in-depth account.

 This has been applied to fMRIs~\cite{worsley2004unified}, cosmology~\cite{van2011alpha}, and more recently various machine learning classification problems~\cite{effEC}. There has also been research into efficient streaming algorithms for their computation on image data  \cite{streamECC}. 
In terms of multi-parameter settings, we mention~\cite{adler2012rotation}, which proves certain convergence properties of a Euler surface arising from smoothing  a Gaussian random field (GRF).

Multiple Euler characteristic curves have also been used for shape analysis in the form of the Euler characteristic transform (ECT)~\cite{10.1093/imaiai/iau011,curry2018many,ghrist2018persistent}. In Section~\ref{sec:ect}, we comment and describe the relationship between the two constructions.   This can be seen indirectly as an alternative approach to multi-parameter settings, with both applications and interesting theoretical properties. The ECT is closely related to the Euler integration ~\cite{baryshnikov2011inversion,ghrist2011euler,baryshnikov2009target}, which like the Euler charateristic has appeared in several different mathematical areas.


\section{Preliminaries}
\label{sec:basics}
We begin with the definition of the Euler characteristic: 
\begin{definition}   
\label{def:euler_char}
Let $X$ be a CW complex of dimension $k$. The Euler characteristic $\chi(X)$ of $X$ is
\begin{equation}
    \label{eq:euler_char}
    \chi(X) = \sum_{i=0}^k (-1)^i  n_i,
\end{equation}
where $n_i = |\{ \sigma \in X \ \vert \ \sigma \text{ is } i  \text{ dimensional }\}|$, or the number of $i$-dimensional cells.
\end{definition}
There are many other ways to characterize the Euler characteristic, including the alternating sum of Betti numbers or the integral of curvature over a Riemannian manifold (or appropriate triangulation). As our primary interest is (finite) data, we do not further recount the basic properties of the Euler characteristic, referring the reader to any standard text on algebraic  topology~\cite{hatcher2002algebraic} or differential topology~\cite{guillemin2010differential}. Furthermore, we may assume that we are generally in a setting where different notions of Euler characteristic coincide. While our definition is in terms of CW complexes, we focus on two special cases which represent constructions which are either direct representations of data or are  common constructions from data. That is, we restrict ourselves to
\begin{enumerate}
    \item simplicial complexes 
    \item cubical complexes/images
\end{enumerate}
In the case of simplicial complexes, we consider primarily proximity based complexes such as  the alpha complex and the Vietoris-Rips complex. While we give different constructions, we generally denote the underlying complex by $K$.
 
\begin{remark}
While we use the corresponding structure of these special cases in the algorithms, there is a clear modification for more general complexes, e.g. cellular complexes, which may arise from other types of preprocessing such as collapses using Discrete Morse Theory. However, as the algorithms are linear in the size of simplicial complexes, it is likely that any additional preprocessing is likely to increase computation time, so we do not consider it here. 
\end{remark}


\begin{figure}[tb]
    \centering
    \begin{subfigure}[b]{1.7in}
        \includegraphics[width=\textwidth]{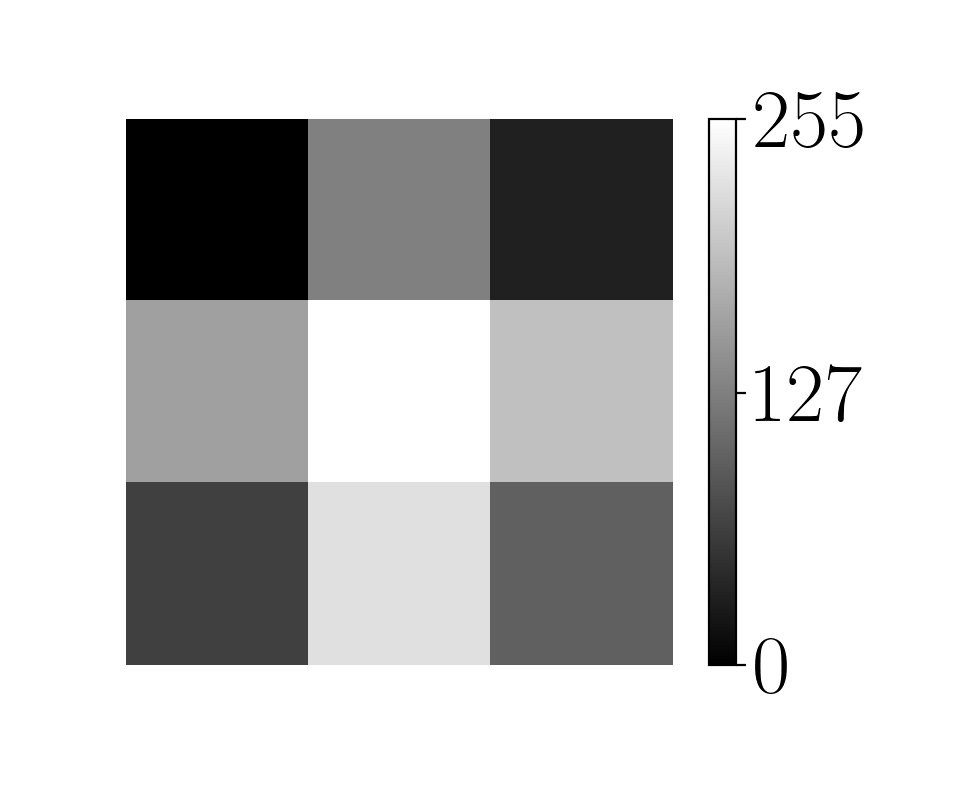}
        \caption{}   
        \label{fig:toy_image}
    \end{subfigure}
    \begin{subfigure}[b]{1.4in}
        \includegraphics[width=\textwidth]{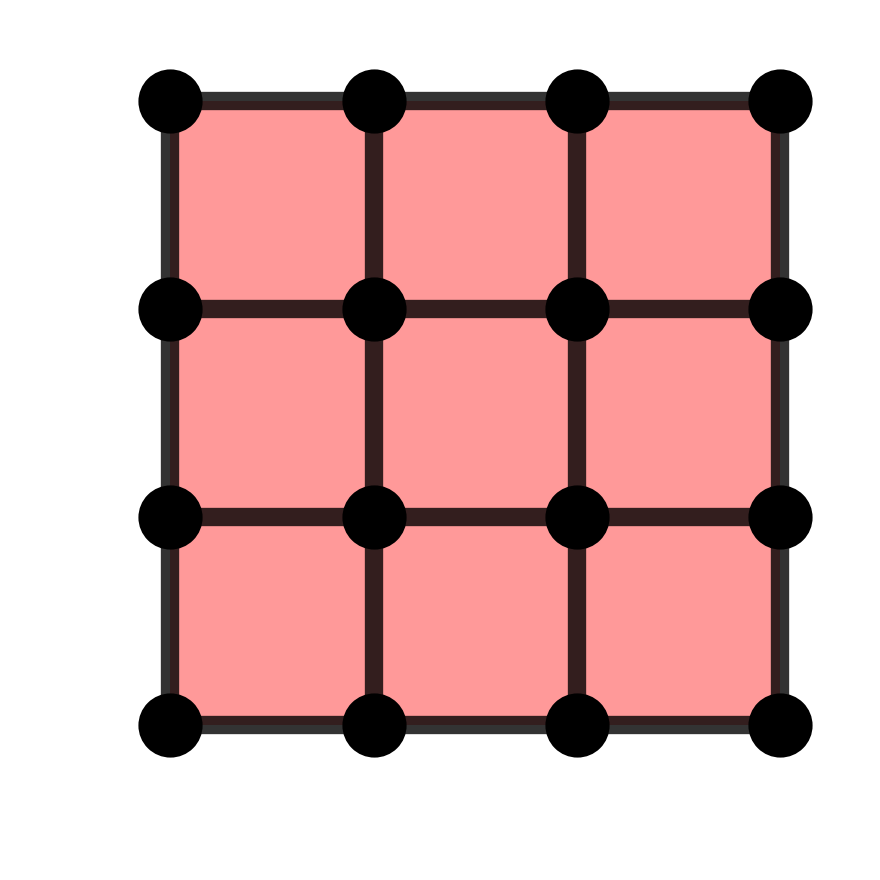}
        \caption{}   
        \label{fig:toy_cubical}
    \end{subfigure}
    \qquad
    \begin{subfigure}[b]{1.4in}
        \includegraphics[width=\textwidth]{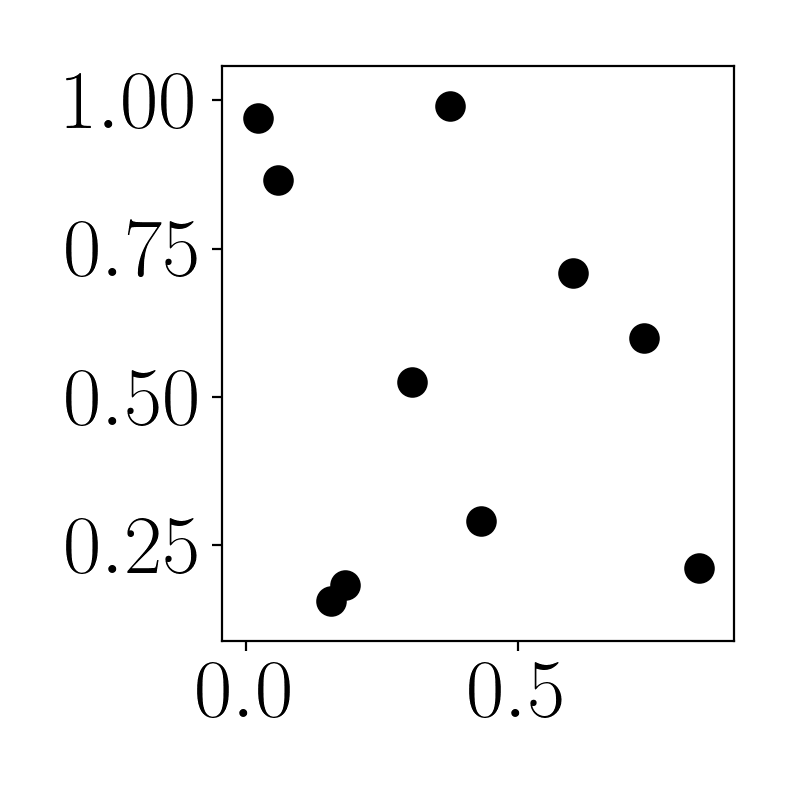}
        \caption{}   
        \label{fig:toy_points}
    \end{subfigure}
    \begin{subfigure}[b]{1.4in}
        \includegraphics[width=\textwidth]{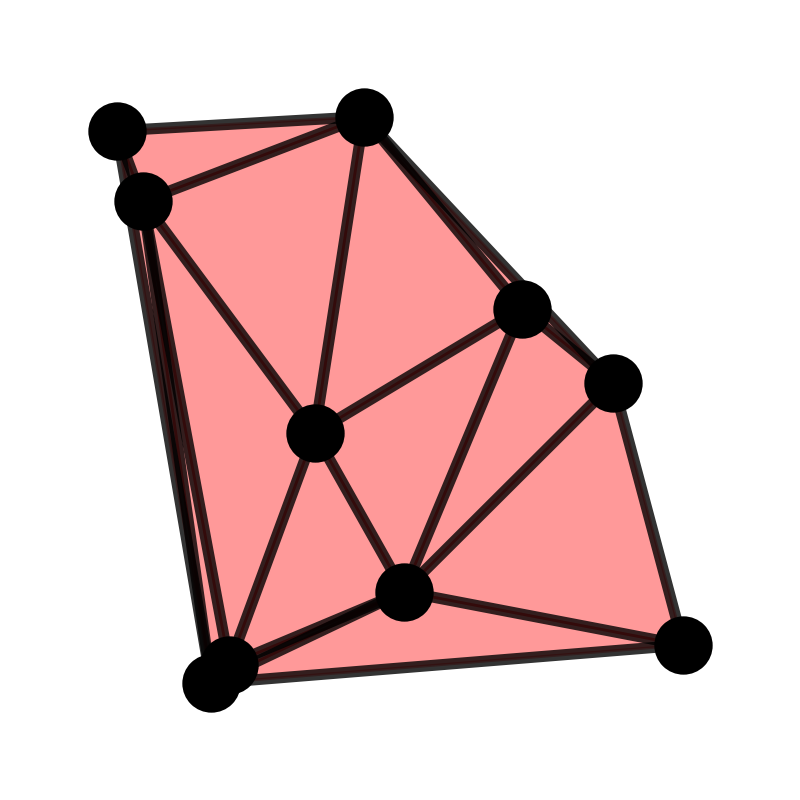}
        \caption{}   
        \label{fig:toy_delaunay}
    \end{subfigure}
    \caption{From left to right: $3\times 3$ gray-scale image, full cubical complex $Q$ of the image in (a), finite point set in $\mathbb{R}^2$, and Delaunay complex $D$ of the points in (c).}
    \label{fig:toy_data}
\end{figure}

\begin{figure}[tb]   
  \centering
  \begin{subfigure}[b]{4.5in}
    \fbox{\includegraphics[width=\textwidth]{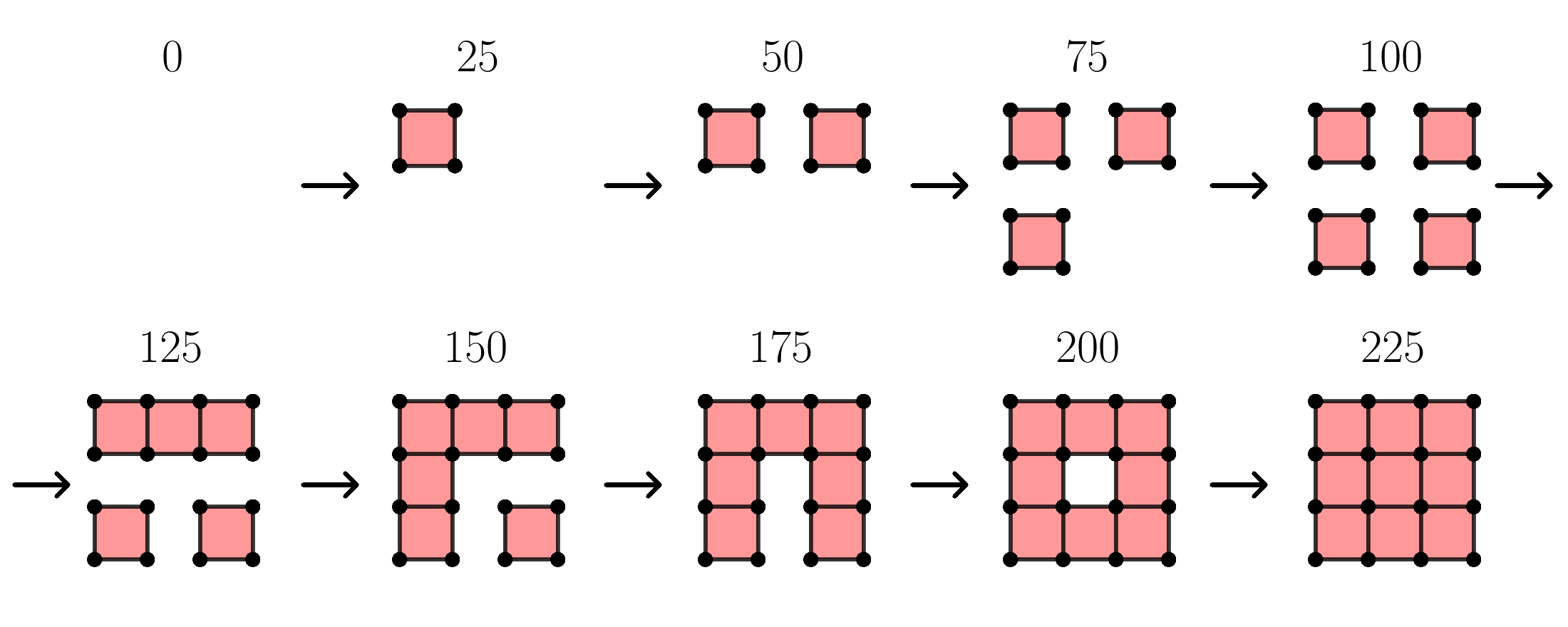}}
     \caption{Filtration of the gray-scale image in Figure \ref{fig:toy_image}.}
    \label{fig:toy_filtration_image}
  \end{subfigure}
  \begin{subfigure}[b]{4.5in}
    \fbox{\includegraphics[width=\textwidth]{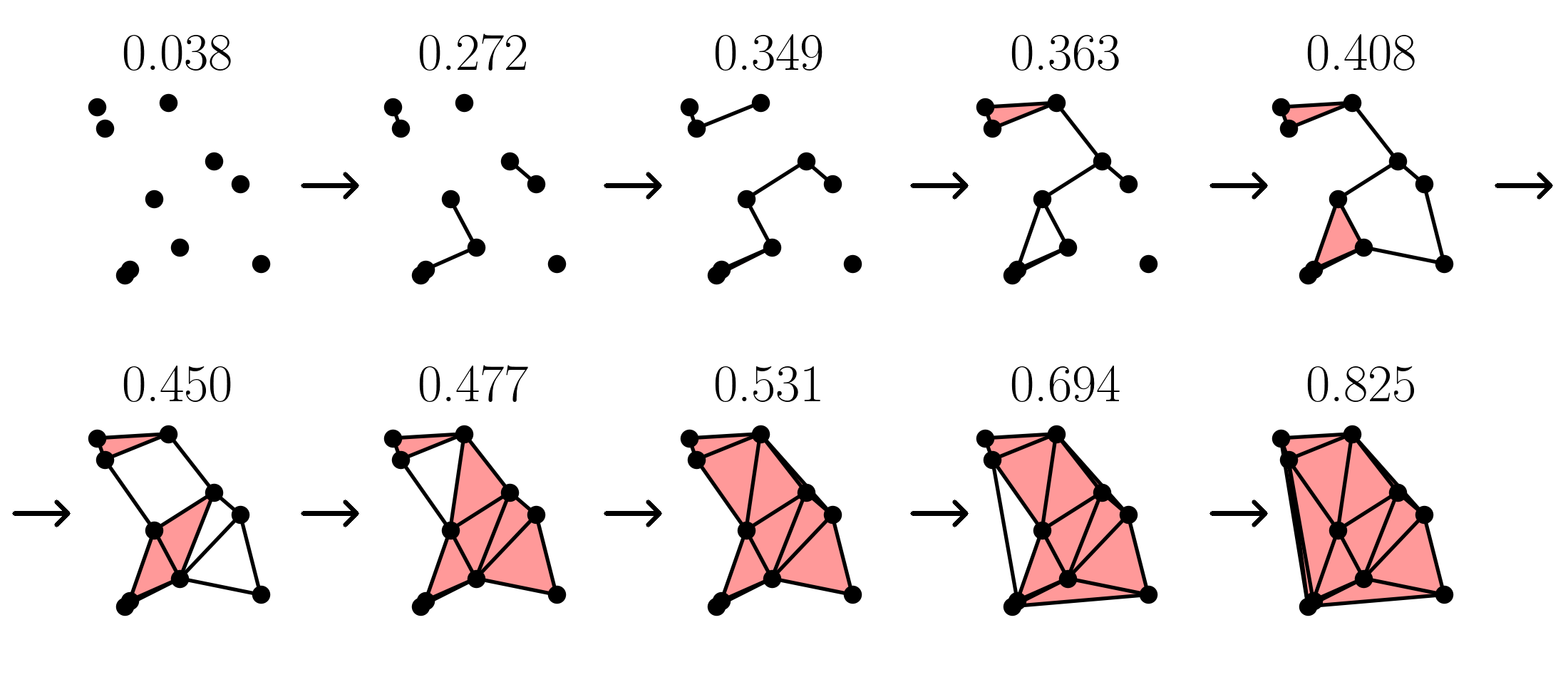}}
    \caption{Filtration of the finite set of points in Figure \ref{fig:toy_points}.}
    \label{fig:toy_filtration_points}
  \end{subfigure}
  \caption{Filtrations of the example data in Figure \ref{fig:toy_data}. In both (a) and (b) the filtration parameters are displayed above each subcomplex. The last subcomplexes in the sequence are the full cubical complex $Q$ and the Delaunay complex $D$.}
  \label{fig:toy_filtrations}
\end{figure}






We begin with the one-dimensional parameter case. Due to its connection with persistent homology, it is the most well-studied and familiar case in topological data analysis (TDA). The basic object of study is no longer one space, but a sequence of spaces called a \emph{filtration}.
A filtration is a increasing sequence of nested spaces:
\begin{equation}
    \emptyset \subseteq X_1 \subseteq X_2 \subseteq \cdots \subseteq X_m.
\end{equation}
The indexing set may be discrete ($\mathbb{Z}$) or continuous ($\mathbb{R}$). One of the most common ways to construct filtrations in TDA is as sub-level sets of functions. Given a real-valued function $f:X\rightarrow \mathbb{R}$ and a threshold $\alpha$, we may obtain a space
$$X_\alpha = f^{-1}((-\infty, \alpha]).$$ 
By varying $\alpha$, we obtain the \emph{sub-level set filtration} (resp. the super-level set filtration induced by $f^{-1}([\alpha,\infty)$) induced by $f$.
 In many cases of interest, such as piecewise-linear functions on simplicial complexes, these filtrations are topologically equivalent to a filtration induced by a function which is piecewise constant on each simplex. Namely for a cell, $\sigma \in K$, we have a function
 $$g(\sigma) = \max\limits_{x\in \sigma} f(x)$$
 Under this definition, the sublevel sets of the function forma filtration by subcomplexes since  $g(\tau) \leq g(\sigma)$ for every $\tau, \sigma \in K$ with $\tau$ a face of $\sigma$. 

Since our main interest lies in applications, we restrict ourselves to this piecewise constant setting, with an increasing sequence of finite complexes. In this case, we may assume without loss of generality, that the indexing set is discrete, although for exposition, we refer to the function value of a simplex as to when it enters the filtration.

When these restriction is valid has been well-studied, but we note that this is  the case well-behaved functions of finite complexes. For the interested reader, we point out that the classes of functions which are well-behaved include Morse and constructible functions~\cite{van1998tame,kashiwara1997integral}, but in many cases even these conditions may be relaxed. 

We make one further simplifying assumption: that cells enter the filtration one at a time. This is neither necessary nor restrictive but is rather done for the exposition of the algorithms. A simple condition which ensures this, is the assumption that all cells are assigned a unique function value. 

We refer to a piecewise constant function which induces a filtration as a \emph{filtering function}.
%
%
We now define the Euler characteristic curve.

\begin{figure}[tb]
    \centering
    \begin{subfigure}[b]{0.45\textwidth}
        \includegraphics[width=\textwidth]{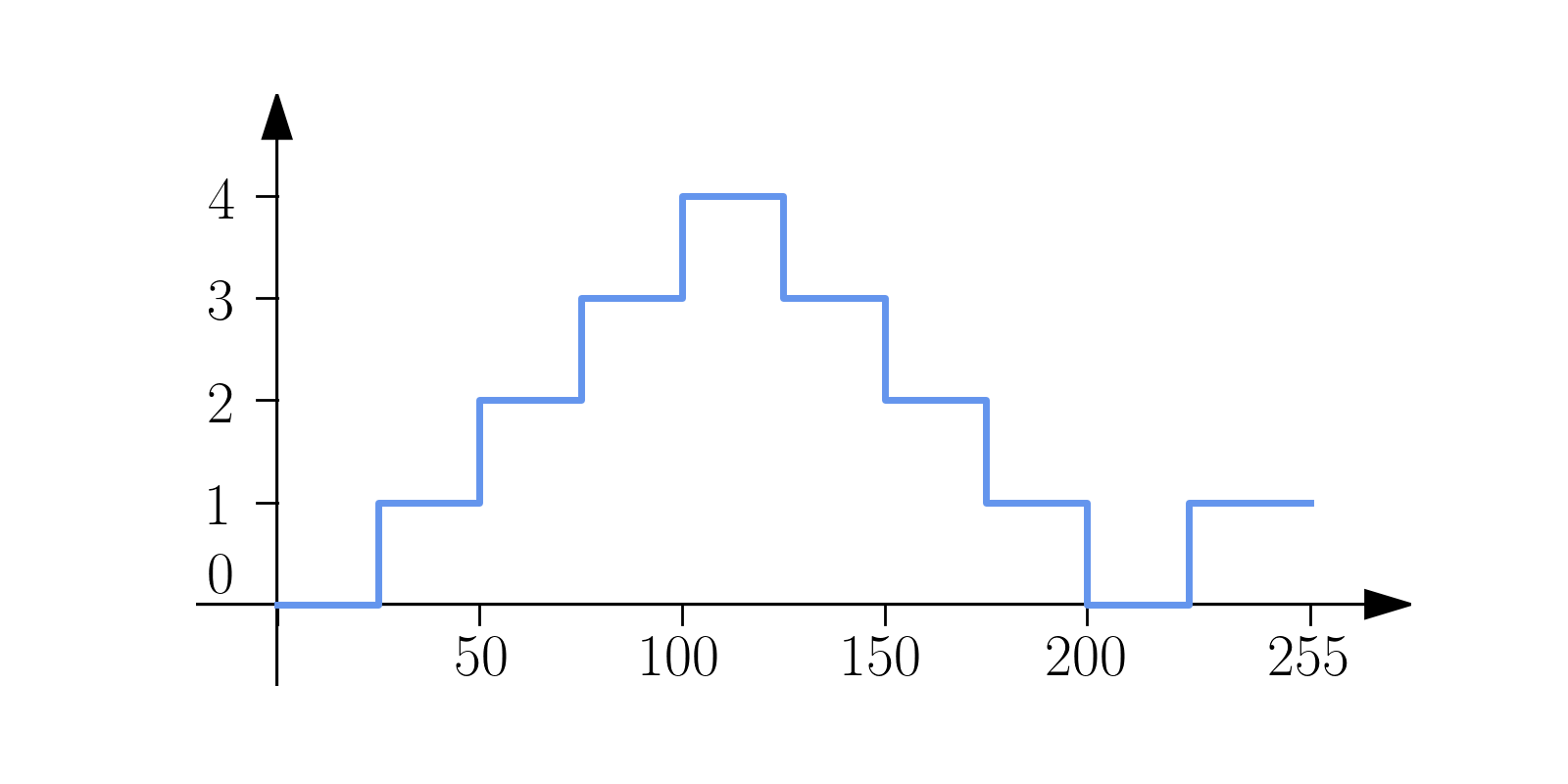}
        \caption{} 
        \label{fig:toy_euler_image}
    \end{subfigure}
    \begin{subfigure}[b]{0.45\textwidth}
        \includegraphics[width=\textwidth]{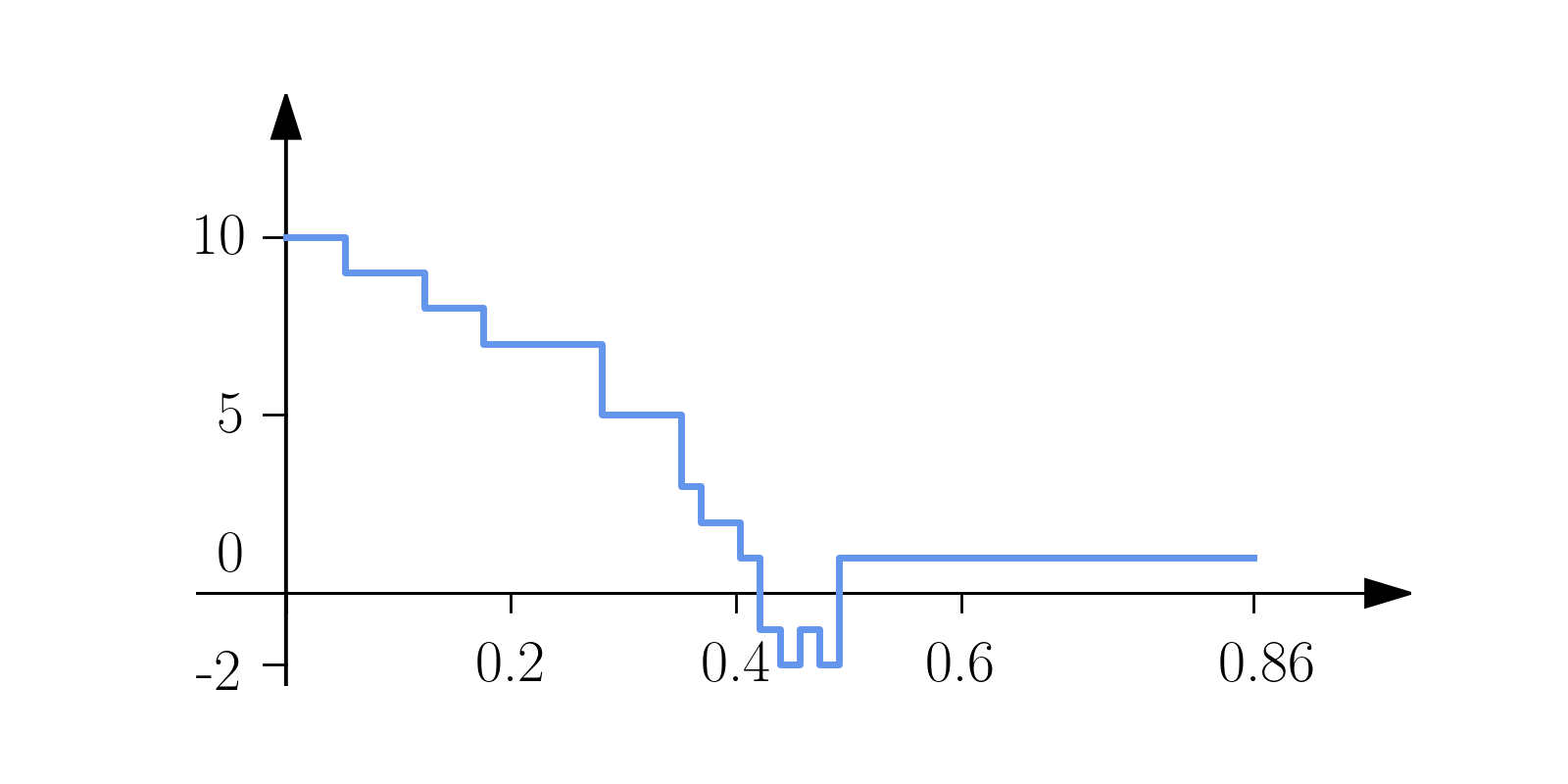}
        \caption{} 
        \label{fig:toy_euler_points}
    \end{subfigure}
    \caption{The plot in (a) is the Euler characteristic curve of the image in Figure \ref{fig:toy_image}, where sublevel sets are on values between $0$ and $255$. The plot in (b) is the Euler characteristic curve of the points in Figure \ref{fig:toy_points}, where the sublevel sets are on values $a_s$ such that there is a one simplex difference between $Q_{a_{s-1}}$ and $Q_{a_s}$ for each $0 \leq s \leq m_1$.}
    \label{fig:toy_euler_curves}
\end{figure}


\begin{definition}   
\label{def:euler_curve}
Given a filtering function $h: K \rightarrow \mathbb{R}$, the Euler characteristic curve induced by $h$ is the integer-valued function $E_h: \mathbb{R}\rightarrow \mathbb{Z}$ defined by
\begin{equation}
    E_h(\alpha) = \chi(f^{-1}((-\infty, \alpha])), 
\end{equation}
for each $\alpha \in \R$.
\end{definition}
Examples of the Euler characteristic curves of are given in Figure \ref{fig:toy_data} are in Figure \ref{fig:toy_euler_curves}. As described in Section~\ref{sec:related_work}, this curve has been used in a  number of difffernt applications. 

\section{Euler Characteristic Surfaces}
Our goal is to extend this to multi-parameter filtrations.  In generalizing, we consider filtrations which are Cartesian products of one parameter filtrations.
\begin{definition}
A $k$-parameter filtration is the Cartesian product of $k$ one-parameter filtrations.
\end{definition}
Our restriction to products may seem restrictive. However, any finite poset may be embedded into a product of linear orders, which is referred to as the order dimension or the Dushnik–Miller dimension. While in general, computing the minimal embedding dimension of of a poset is NP-hard, there are many special cases which are known. Constructive techniques for embedding posets into Cartesian products is interesting but we leave it for further work, as our main interest in applications come from Cartesian products induced by functions in $\R^d$ (although there are many cases of interest which do not fall into this category). 
In the interest of readability, we focus primarily the case of bi-filtrations (also since these are also readily visualised).
\begin{definition}   
\label{def:bifiltration}
Let $h_1: K \rightarrow \mathbb{R}$, $h_2: K \rightarrow \mathbb{R}$ be two filtering functions on a complex, and $\mathbf{h} = (h_1, h_2):K\rightarrow \mathbb{R}^2$ the function defined by $\mathbf{h}(\sigma) = (h_1(\sigma), h_2(\sigma))$ for each $\sigma \in K$. Given two sets of monotonically increasing values $\mathcal{R}_1 = \{ a_s \}_{s=0}^{m_1}$, $\mathcal{R}_2 = \{ b_t \}_{t=0}^{m_2}$ and defined $K_{s,t} = \mathbf{h}^{-1}((-\infty, a_s]\times (-\infty, b_t])$, the sublevel set bi-filtration of $K$ induced by $\mathbf{h}$ on $\mathcal{R}_1$, $\mathcal{R}_2$ is the grid of nested subcomplexes
\begin{equation}
    \begin{matrix}
    K_{0,0} & \subseteq & K_{0,1} & \subseteq & \cdots & \subseteq & K_{0,m_2} \\
    \rotatebox[origin=c]{270}{$\subseteq$} & &
    \rotatebox[origin=c]{270}{$\subseteq$} & & & &
    \rotatebox[origin=c]{270}{$\subseteq$} \\   
    K_{1,0} & \subseteq & K_{1,1} & \subseteq & \cdots & \subseteq & K_{1,m_2} \\
    \rotatebox[origin=c]{270}{$\subseteq$} & &
    \rotatebox[origin=c]{270}{$\subseteq$} & & & &
    \rotatebox[origin=c]{270}{$\subseteq$} \\   
    \vdots & & \vdots & & \ddots & & \vdots \\
    \rotatebox[origin=c]{270}{$\subseteq$} & &
    \rotatebox[origin=c]{270}{$\subseteq$} & & & &
    \rotatebox[origin=c]{270}{$\subseteq$} \\   
    K_{m_1,0} & \subseteq & K_{m_1,1} & \subseteq & \cdots & \subseteq & K_{m_1,m_2} 
\end{matrix}
\end{equation}
We say that $\mathbf{h}: K \rightarrow \mathbb{R}^2$ is a bi-filtering function on $K$.
\end{definition}
%
\begin{definition}  
\label{def:euler_surf}
Let $\mathbf{h}: K \rightarrow \mathbb{R}^2$ be a bi-filtering function on $K$ and $\mathcal{R}_1 = \{a_s \}_{s=0}^{m_1}$, $\mathcal{R}_2 = \{ b_t \}_{t=0}^{m_2}$ two set of monotonically increasing real values. The Euler characteristic surface induced by $\mathbf{h}$ on $\mathcal{R}_1$, $\mathcal{R}_2$ is the matrix of integer values
\begin{equation}
    S_{\mathbf{h}} = 
    \begin{pmatrix}
    \chi(K_{0,0}),& \chi(K_{0,1}),& \cdots & \chi(K_{0,m_2}) \\
    \chi(K_{1,0}),& \chi(K_{1,1}),& \cdots & \chi(K_{1,m_2}) \\
    \vdots & \vdots & \ddots & \vdots \\
    \chi(K_{m_1,0}),& \chi(K_{m_1,1}),& \cdots & \chi(K_{m_1,m_2})
    \end{pmatrix},
\end{equation}
where $K_{s,t} = \mathbf{h}^{-1}((-\infty, a_s]\times (-\infty, b_t])$.
\end{definition}

The last column and last row of $S_{\mathbf{h}}$ are the Euler characteristic curves of $h_1$ and $h_2$ respectively. So the Euler characteristic surface contains all the topological information of $E_{h_1}$ and $E_{h_2}$ at the values in $\mathcal{R}_1$ and $\mathcal{R}_2$, plus the information coming from intersection of sublevel sets of $h_1$ and $h_2$.  
    
In practice, we have two forms of data we consider: digital images, which are treated as a form of cubical complex, and simplicial complexes, where we focus on proximity complexes on point clouds. 
\begin{definition}   
\label{def:filter_image}
Let $M$ be a $n_1$-by-$n_2$ two-dimensional gray-scale image.
The cubical complex $Q$ of $M$ is defined by the union of squares $C_{i,j} = [i, i+1] \times [j, j+1]$ and their subfaces for each pixel $(i,j)$ of $M$.
We define the pixel intensity filtering function $h_M: Q \rightarrow \mathbb{R}$ setting 
\begin{enumerate}
    \item[\textit{(i)}] $h_M(C_{i,j}) = v_{i,j}$  for each $1 \leq i \leq n_1$ and $1 \leq j \leq n_2$, where $v_{i,j}$ is the intensity of pixel $(i,j)$;
    \item[\textit{(ii)}] $h_M(C) = \min\limits_{C \subseteq C_{i,j}} h_M(C_{i,j}) $ for each element $C$ of $Q$.
\end{enumerate}
In the same way, given a three-dimensional gray-scale image $M$, $h_M$ is defined by voxels intensities $v_{i,j,k}$ setting 
$h_M(C_{i,j,k}) = v_{i,j,k}$ 
and 
$h_M(C) = \min\limits_{C \subseteq C_{i,j,k}} h_M(C_{i,j,k})$ 
for each voxel $(i,j,k)$ and each element $C$ in $Q$.
\end{definition}

We also consider simplicial complexes built on point sets in $\mathbb{R}^d$.  
We focus on \emph{proximity complexes} -- complexes which depend on distances between the points. 
These include common constructions in TDA such as \v Cech complexes, Vietoris-Rips complexes, alpha and Delaunay complexes~\cite{CompTop2010}.
%
%

%
Again we consider the image $M$ in Figure \ref{fig:toy_image}, and the finite set of points $X$ in Figure \ref{fig:toy_points}. The filtrations induced on this data by the filtering functions $h_M$ and $h_X$ are in Figure \ref{fig:toy_filtration_image} and Figure \ref{fig:toy_filtration_points} respectively. These functions may be based on distance or some other function(s). In our experiments, we use the Delauanay complex as most of our experiments are in two or three-dimensional space. 
\begin{remark}
    A key advantage to using the Euler characteristic is that it is defined pointwise. That is, the value is determined by each complex, the maps between the spaces have no effect on the invariant, i.e. different maps will lead to the same Euler characteristic surface. While this does make it a weaker invariant, it also implies that the problems present in multidimensional persistence are avoided. 
\end{remark}

\paragraph{Example: Euler characteristic surfaces of random images.} 
We show with an example that the Euler characteristic surfaces of pairs of images can contain useful information for distinguishing between different classes in a dataset, while the Euler characteristic curves of the same images do not.

Our data consists of families of pairs of $n_1 \times n_2$ random images $M_1$, $M_2$, whose pixel intensities are uniformly distributed and have an expected correlation $p$. 
These are generated by drawing three sample values for each pixel $(i,j)$. 
In particular, we draw $x, v_1, v_2$ from independent uniform distributions $\mathcal{U}(0,1)$, $\mathcal{U}(0, 256)$, $\mathcal{U}(0,256)$. 
If $x \leq p$,  we set $M_1[i][j] = M_2[i][j] = \lfloor v_1 \rfloor$. 
Otherwise, we set $M_1[i][j] = \lfloor v_1 \rfloor$ and $M_2[i][j] = \lfloor v_2 \rfloor$.
Hence, we set $M_1[i][j]$ and $M_2[i][j]$ to the same random integer with probability $p$ and to independently drawn random integers with probability $(1-p)$.

Given a pair of gray-scale images $M_1$, $M_2$, we obtain an Euler characteristic surface by bi-filtering on the pixel intensity filtering functions $h_{M_1}$, $h_{M_2}$.
In this setting, we can derive the expected value of the Euler characteristic $\chi(Q_{s,t}) = \chi\left( \mathbf{h}^{-1}((-\infty, s] \times (-\infty, t]) \right)$ for each pair of thresholds $0 \leq s \leq m_1$, $0 \leq t \leq m_2$.
We know that a vertex $(i,j)$ is in $Q_{s,t}$ if and only if at least one of the squares that include it is in $Q_{s,t}$.
The same holds for edges in $Q_{s,t}$.
Thus, given the probability of having squares in $Q_{s,t}$, the probabilities of having vertices and edges can be derived.
By the definition of $M_1$ and $M_2$, in terms of random values sampled from uniform distributions, it follows that
\begin{equation}
    \begin{split}
      P(C_{i,j} \in Q_{s,t}) = & \
		    \ P \Big( h_{M_1}(C_{i,j}) < s \text{ and } 
		             h_{M_2}(C_{i,j}) < t
		             \text{, with } h_{M_1}(C_{i,j}) = h_{M_2}(C_{i,j}) \Big) \cdot p  \\
		    & + P \Big( h_{M_1}(C_{i,j}) < s \text{ and } 
		                h_{M_2}(C_{i,j}) < t 
		                \text{, with } h_{M_1}(C_{i,j}), h_{M_2}(C_{i,j}) \text{ independent} \Big) \cdot (1-p) \\
	        = & \ \ \min \{ s, t\} \cdot p 
	              + s \cdot t \cdot (1-p)	 
    \end{split}
\end{equation}
where $0 \leq s, t \leq 255$ and $C_{i,j}$ is any square in the two-dimensional cubical complex $Q$.
Then, because the values of different pixels are independent of each other, the probability that a vertex/edge $\sigma'$ belongs to $Q_{s,t}$ is $1 - ( 1 - P(C_{i,j} \in Q_{s,t})^k)$, where $k$ is the number of squares containing $\sigma'$.

Finally, there are $n_1 \cdot n_2$ squares in the cubical complex $Q$ of $M_1$ and $M_2$.
These contain $(n_1+1)\cdot(n_2+1)$ vertices, subdivided into $(n_1-1) \cdot (n_2-1)$ internal vertices contained into $4$ squares each, $2(n_1-1)+2(n_2-1)$ boundary vertices contained into $2$ squares each, and $4$ corner vertices contained in a single square.
Moreover, there are $n_1(n_2+1) + n_2(n_1+1) -2n_1 -2n_2$ interval edges contained in $2$ squares each, and $2n_1 + 2n_2$ boundary edges contained in a single square.
Hence the expected value of $\chi(Q_{s,t})$ is
\begin{equation}
    \label{eq:exp_eu_char}
	\begin{split}
		\mathbb{E}[\chi(Q_{s,t})] & = (n_1-1) \cdot (n_2-1) \cdot 
		                 [1 - (1 - P(C_{i,j} \in Q_{s,t})^4)] 
		                 \\
					 	 & + (n_1 \cdot (n_2+1) + n_2\cdot (n_1+1) - 4) \cdot 
					 	 [1 - (1 - P(C_{i,j} \in Q_{s,t})^2)] 
					 	 \\
						 & + (n_1 \cdot n_2 + 2n_1 + 2n_2 + 4) \cdot 
						 P(C_{i,j} \in Q_{s,t}).
	\end{split}
\end{equation}

\begin{figure}[tb]
    \centering
    \begin{subfigure}[b]{0.32\textwidth}
        \includegraphics[width=\textwidth]{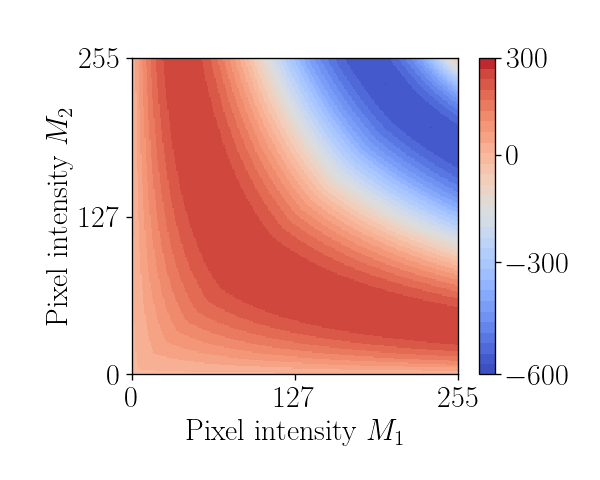}
        \caption{} 
        \label{fig:analytical-surf-1}
    \end{subfigure}
    \begin{subfigure}[b]{0.32\textwidth}
        \includegraphics[width=\textwidth]{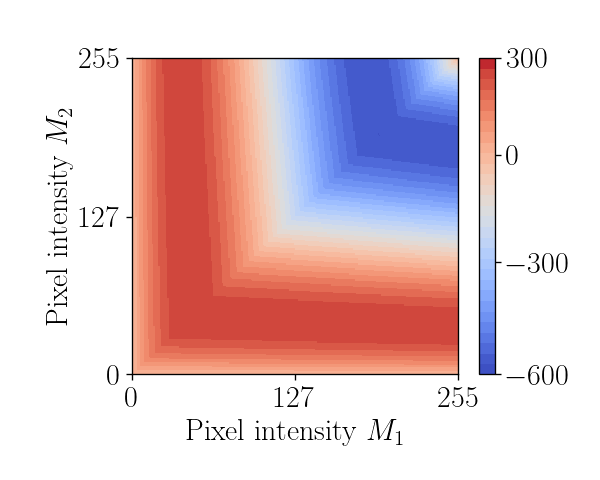}
        \caption{} 
        \label{fig:analytical-surf-2}
    \end{subfigure}
    \begin{subfigure}[b]{0.32\textwidth}
        \includegraphics[width=\textwidth]{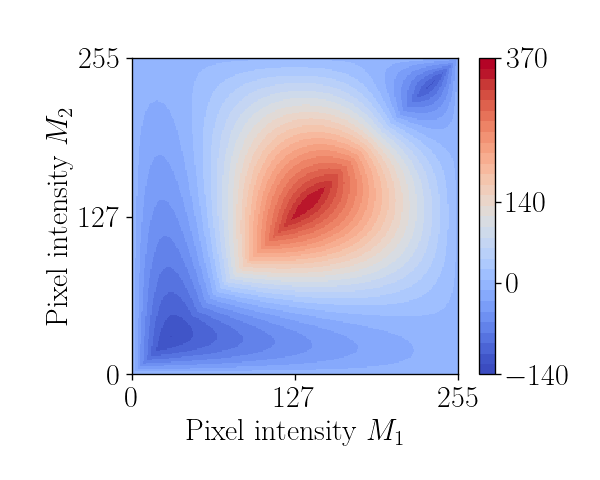}
        \caption{} 
        \label{fig:diff-analytical-surf}
    \end{subfigure}
    \caption{Contour plot of $S_{(h_{M_1}, h_{M_2})}$ of a pair of random images $M_1$, $M_2$ with correlation coefficient equal to $0.1$ in (a) and equal to $0.8$ in (b). The difference between these two Euler surfaces is the Euler terrain in (c).}
    \label{fig:expected_surfaces}
\end{figure}

Figure \ref{fig:expected_surfaces} displays the contour plots of expected Euler characteristic surfaces for two different values of $p$. The images defining them have the same uniformly random distribution of pixel values, so the corresponding expected Euler characteristic curves are all equal. 
However, the Euler characteristic surfaces are able to discriminate between pairs of images with different correlation $p$, see the Euler terrain in Figure \ref{fig:diff-analytical-surf}.
Furthermore, it is readily apparent that the structure of the of surfaces is encoding information about the correlation. Since this is a toy model, we do not characterize the behaviour rigorously here. However, it does serve as an indication as to the structural information which is encoded in Euler surfaces. This can be particularly useful for comparing models with real-world data, as the shape of the surface may indicate what behavior in data, a model is failing to capture. This is especially intriguing as closed form expressions often exist Euler characteristics \cite{adler2009random,bobrowski2020homological} or alternatively, as a well-behaved functional, it can be empirically estimated. 
\begin{problem}
 How Euler surfaces for models with multiple parameters differ for varying values of  parameters? 
\end{problem}

\myparagraph{Difference of Euler surfaces.} Expected Euler surfaces are most often well-behaved  with closed-form solutions often existing. Alternatively the empirical expected Euler characteristic can be efficiently computed as the surfaces are defined pointwise. Therefore, when comparing Euler surfaces which come from two classes, a natural object to investigate is the \emph{difference of average Euler surfaces}. While this is computed pointwise, the resulting structure of this object provides insight into parameter values which differentiate between the two classes. 
\begin{definition}
    Given two sets of Euler surfaces over a common parameter space denoted $\{K_{s,t}\}$ and $\{K'_{s,t}\}$, their difference at point $s,t$ is defined as
    $$ \mathrm{ECS} (K,K')[s][t] := \frac{1}{|\{K_{s,t}\}|}  \sum \chi(K_{s,t}) - \frac{1}{|\{K'_{s,t}\}|}  \sum \chi(K'_{s,t}) $$
\end{definition}
For brevity, we refer to this object as the {\bf Euler terrain} or simply as the {\bf terrain}. One important point for this object is that if we are comparing two distinct random processes where the expected Euler characteristic exists and the corresponding surface is continuous (after scaling), replacing the empirical averages with expectations results in the expected difference. By linearity of expectations, the terrain also exists, is continuous, and provides a comparison of two processes. 

In experimental instances where the number of samples for the two classes is highly unbalanced (or the number of samples is small), it makes sense to further normalize by the standard deviation of each Euler characteristic. We refer to this as a \emph{normalized terrain}, given by  
  $$ \overline{\mathrm{ECS}} (K,K')[s][t] := \frac{ECS(K,K')[s][t]}{\mathrm{sd}(\chi(K)[s][t]) +\mathrm{sd}(\chi(K')[s][t])}$$
where $\mathrm{sd}$ is the standard deviation. Terrains provide interesting information about the data which we investigate experimentally in Sections ~\ref{sec:experiments} and ~\ref{sec:realworld}.

\section{Connection with Euler Characteristic Transform}
\label{sec:ect}
The Euler Characteristic Transform (ECT) \cite{10.1093/imaiai/iau011,curry2018many,ghrist2018persistent} was originally considered for three-dimensional shape analysis.  Here  we present the basic form and refer the reader to \cite{curry2018many} for more general and in-depth presentation. 

Given a compact shape in $\R^d$, consider the space of directions given as points on $\mathbb{S}^{d-1}$. For each direction consider the sub-level set filtration given by the height function in that direction.  This yields an Euler Characteristic Curve for each direction.
There is the surprising theorem:
\begin{theorem}[Theorem 3.4~\cite{curry2018many}]
 If $M$ and $M'$ are two constructible subsets of $\R^d$, then equivalence of the  Euler characteristic transforms implies that the sets are equal,
 i.e.
 $$\mathrm{ECT}(M) =\mathrm{ECT}(M') \Rightarrow M=M'$$ 
\end{theorem}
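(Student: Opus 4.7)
The plan is to recover the indicator function $\mathbf{1}_M$ pointwise from $\mathrm{ECT}(M)$; equality of ECTs then forces $\mathbf{1}_M = \mathbf{1}_{M'}$ everywhere on $\R^d$, hence $M = M'$. The natural framework is Euler integration of constructible functions. Writing $H^-_{v,t} = \{x \in \R^d : \langle x, v\rangle \le t\}$ and $H_{v,t} = \{x : \langle x,v\rangle = t\}$, the ECT is
$$\mathrm{ECT}(M)(v,t) \;=\; \chi\bigl(M \cap H^-_{v,t}\bigr) \;=\; \int_{\R^d} \mathbf{1}_M(x)\,\mathbf{1}_{H^-_{v,t}}(x)\, d\chi(x),$$
so my approach is first to pass from half-space data to hyperplane data, and then to invert the resulting Radon-type transform via a Schapira-style formula.

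\textbf{Step 1 (hyperplane extraction).} Since $M$ is constructible, for each fixed $v \in \mathbb{S}^{d-1}$ the map $t \mapsto \chi(M \cap H^-_{v,t})$ is a constructible function of $t$ with finitely many jump discontinuities. A one-sided finite difference at each jump yields $\chi(M \cap H_{v,t})$, and elsewhere the difference vanishes. From the ECT alone I therefore obtain the constructible Radon-type transform $(v,t) \mapsto \chi(M \cap H_{v,t})$, and in particular $\chi(M) = \lim_{t\to\infty}\chi(M \cap H^-_{v,t})$.

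\textbf{Step 2 (Schapira inversion).} Fix a test point $x_0 \in \R^d$. The key identity, which is the Euler-integral analogue of Radon inversion, has the form
$$\int_{\mathbb{S}^{d-1}} \chi\bigl(M \cap H_{v,\,\langle x_0,v\rangle}\bigr)\, dv \;=\; C_d \cdot \mathbf{1}_M(x_0) \;+\; D_d \cdot \chi(M),$$
for explicit constants $C_d, D_d$ depending only on $d$; the inner integrand is piecewise constant in $v$ away from a measure-zero exceptional set, so the outer integral is an ordinary spherical integral. Since both inputs on the left are known from Step~1 and $\chi(M)$ was recovered as a limit, the formula reads off $\mathbf{1}_M(x_0)$ for every $x_0$. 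Applying the identity to both $M$ and $M'$ under the hypothesis $\mathrm{ECT}(M) = \mathrm{ECT}(M')$ forces $\mathbf{1}_M \equiv \mathbf{1}_{M'}$, so $M = M'$.

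\textbf{Main obstacle.} The technical heart is the inversion identity itself: one must exhibit a constructible stratification of the incidence variety $\{(x,v)\in \R^d \times \mathbb{S}^{d-1} : \langle x - x_0, v\rangle = 0\}$ compatible with a stratification of $M$, verify measurability of the Euler-slice integrand in $v$, and carry out a change-of-order computation whose local contribution at $x_0$ reduces to the Euler characteristic of a spherical cap (yielding $C_d$) while the global contribution of points far from $x_0$ collapses to a multiple of $\chi(M)$ (yielding $D_d$). This cap/complementary-cap dichotomy is where the classical Schapira argument does the real work; everything else is bookkeeping.
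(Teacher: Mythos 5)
First, a point of reference: the paper does not prove this statement at all --- it is imported verbatim as Theorem 3.4 of Curry, Mukherjee and Turner --- so there is no in-paper argument to compare against, and I am measuring your outline against the standard proof in that reference, which is Schapira's inversion of the constructible Radon transform. Your overall strategy is exactly that standard one, and your Step 1 is fine: for compact definable $M$, additivity of the compactly supported Euler characteristic together with definable local triviality justifies recovering $\chi(M\cap H_{v,t})$ from one-sided differences of $t\mapsto\chi(M\cap H^-_{v,t})$, and $\chi(M)$ from the $t\to\infty$ limit.

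The genuine gap is in Step 2. As written, your inversion identity integrates $v\mapsto\chi\bigl(M\cap H_{v,\langle x_0,v\rangle}\bigr)$ against Lebesgue measure $dv$ on $\mathbb{S}^{d-1}$, and no identity of the form $C_d\,\mathbf{1}_M(x_0)+D_d\,\chi(M)$ with constants depending only on $d$ holds for that integral. Concretely, in $\R^2$ take $M$ to be a closed segment not containing $x_0$: the line through $x_0$ with normal $v$ meets $M$ for a set of directions of Lebesgue measure $2\theta$, where $\theta$ is the angle $M$ subtends at $x_0$, so the left-hand side equals $2\theta$, which is not determined by $\mathbf{1}_M(x_0)=0$ and $\chi(M)=1$ (testing against $M=\{p\}$ with $p\ne x_0$ forces $D_2=0$, which the segment example then contradicts). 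The underlying reason your change-of-order heuristic fails is that a Lebesgue integral in $v$ cannot be swapped with an Euler integral in $x$: for $x\ne x_0$ the incident directions form a great subsphere $\mathbb{S}^{d-2}$, which has Lebesgue measure zero but Euler characteristic $1+(-1)^d$, and it is the latter that must be counted. The repair is to use the dual Radon transform in the constructible-function calculus, i.e.\ integrate over $\mathbb{S}^{d-1}$ with respect to $d\chi$ rather than $dv$; the Euler-calculus Fubini theorem for the definable incidence kernel then yields Schapira's formula with $C_d=\chi(\mathbb{S}^{d-1})-\chi(\mathbb{S}^{d-2})=2(-1)^{d-1}\ne 0$ and $D_d=\chi(\mathbb{S}^{d-2})$ (full sphere versus great subsphere, not cap versus complementary cap), after which the rest of your argument goes through.
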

This theorem essentially implies that we do not lose any information when passing to a collection of  Euler characteristic curves.
There has been substantial interest toward understanding how finite sampling affects reconstruction. 

A natural question is how do Euler surfaces relate to the ECT. First, the ECT uses many filtering functions. In its theoretical set-up, for a shape in $\R^d$, a function is constructed for each point in $\mathbb{S}^{d-1}$. Even in the approximate case, many different directions are needed. As we are primarily interested in visualizing the results, we restrict to only a few functions (most often only two). Hence the first (rather obvious) connection is that from an ECT, the choice of any $k$ directions will result in a $k$-dimensional Euler Surface (or more accurately an Euler $k$-dimensional hypervolume).

In our applications, there is often a natural real-valued function, i.e. pixel intensity, along with other parameters. The above theorem also applies to this setting.
\begin{corollary}
Given a function on a subset  $K\subset \R^d$, the graph of the function can be reconstructed by the ECT.
\end{corollary}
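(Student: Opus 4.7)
\medskip
\noindent\textbf{Proof proposal.} The plan is to reduce the corollary to a direct application of Theorem~3.4 by treating the graph $\Gamma(f) = \{(x, f(x)) : x \in K\}$ as a subset of the one-higher-dimensional ambient space $\R^{d+1}$. Concretely, I would proceed in three steps.

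First I would recast the data. Rather than thinking of $f$ and $K$ separately, consider $\Gamma(f) \subset \R^{d+1}$ as a single geometric object in its own right. Since the corollary is invoked in a TDA context where $K$ is a finite complex (simplicial, cubical, or CW) and $f$ is a piecewise-constant/piecewise-linear filtering function of the type described around Definitions~\ref{def:euler_char}--\ref{def:filter_image}, the graph $\Gamma(f)$ is a finite union of flat pieces and is in particular constructible. (More generally, if $f$ is constructible in the o-minimal sense, so is its graph.) This is the hypothesis required to invoke the cited theorem.

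Next, I would apply Theorem~3.4 in dimension $d+1$. The theorem says that for constructible $M, M' \subset \R^{d+1}$, equality of $\mathrm{ECT}(M)$ and $\mathrm{ECT}(M')$ forces $M = M'$, where the directions now range over $\mathbb{S}^d$. Setting $M = \Gamma(f)$, the map $\Gamma(f) \mapsto \mathrm{ECT}(\Gamma(f))$ is therefore injective on the class of graphs of constructible functions, and so $\Gamma(f)$ can be reconstructed from its ECT data. Because $\Gamma(f)$ determines both $K$ (as its projection onto the first $d$ coordinates) and $f$ (as the last coordinate over each point of $K$), knowing $\mathrm{ECT}(\Gamma(f))$ suffices to recover the pair $(K, f)$ as claimed.

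The main obstacle, and the reason the corollary is not entirely a one-line restatement, is ensuring that the ambient-space construction genuinely produces a constructible set to which Theorem~3.4 applies. For the tame classes of functions encountered in applications (pixel-intensity functions on a cubical grid, PL functions on finite simplicial complexes, etc.) this is immediate, but for merely measurable or continuous $f$ the graph need not be constructible and the argument breaks down. I would therefore state the corollary under the standing assumption that $f$ is tame enough for $\Gamma(f)$ to be constructible, which is the same regularity hypothesis implicitly used throughout the paper's treatment of filtering functions.
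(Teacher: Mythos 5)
Your argument is correct and is essentially the same as the paper's own proof: embed the graph of the function as a shape in $\R^{d+1}$ and apply Theorem 3.4 there. The extra attention you pay to verifying constructibility of the graph is a reasonable refinement the paper leaves implicit, but the core reduction is identical.
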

\begin{proof}
Since the domain of the function is a subset of $\R^d$, the graph can be embedded as a shape in $\R^{d+1}$. The above theorem then immediately implies the result.
\end{proof}
One observation we make is that in the above case, we need not take all directions on $\mathbb{S}^d$. Rather, we can consider each levelset of the function separately. Each levelset is a set in $\R^d$, which can be reconstructed by considering directions in $\mathbb{S}^{d-1}$. Note that this does not reduce the number of directions considered but can be more intuitive when we are studying real-valued functions.

\begin{figure}[tbp]
    \centering
    \includegraphics[width=0.65\textwidth]{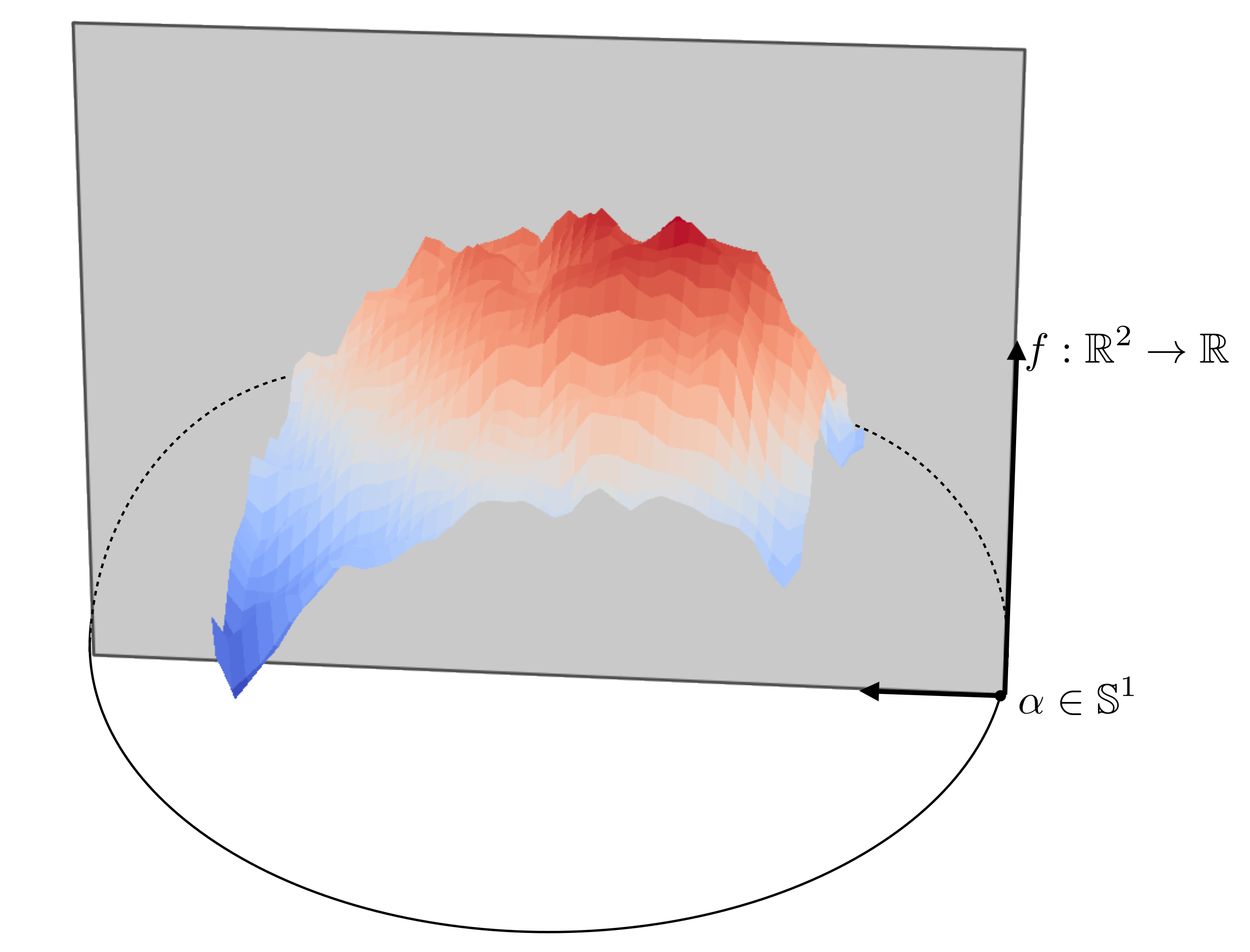}
    \caption{The realtionship between the ECT and the ECS for functions on $\mathbb{R}^2$. For every direction $\alpha$, the ECT computes the Euler characteristic curve based on the filtration arising from the height function in that direction. In the standard setup, it would be a direction on the sphere $\mathbb{S}^2$, but it could equivalently be all ``directions'' in $\mathbb{S}^1\times \mathbb{R}$. The 2-dimensional Euler surface we consider is a direction and the function sublevel sets, so the Euler surface can be thought of as representing the slice shown.  }
    \label{fig:ect1}
\end{figure}
A second observation is that this argument can be iterated, allowing us to reduce to the case of reconstructing shapes in $\R^2$ with directions in $\mathbb{S}^1$. While we believe this reduction can be useful in further analysis of the ECT, in this setting, the Euler Surfaces we use are a point in $\mathbb{S}^1$ for each function (see Figure~\ref{fig:ect1}).  This perhaps best illustrates the connection, where the Euler Surfaces are a sampling of the ECT. As we will show, despite not characterising the shape completely as the ECT, the Euler surfaces can still show us useful information, so we present the following open problem:
\begin{problem}
Given an ECT, is possible to automatically choose some set of ``interesting" Euler surfaces which can help us understand the underlying structure?
\end{problem}
In addition to being highly useful for data analysis, this can provide some better insight into the theory of ECTs.

\section{Algorithms}
\label{sec:algorithms}

In this section, we describe efficient algorithms for the computation of Euler characteristic surfaces of image and point data. 
An implementation of these is provided by the \href{https://pypi.org/project/euchar/}{euchar} Python package, the source code of which is available on Github at \href{https://github.com/gbeltramo/euchar}{https://github.com/gbeltramo/euchar}.

\subsection{Cubical Complexes/Image Data}
\label{subsec:image_data}
First, we describe an algorithm for the computation of the Euler characteristic surface of a pair of gray-scale images $M_1, M_2$. 
In particular, Algorithm \ref{alg:images} returns the surface of the bi-filtration of the sublevel sets of $\mathbf{h}: Q \rightarrow \R^2$ defined by $\mathbf{h}(C) = \left(h_{M_1}(C), h_{M_2}(C)\right)$ for each $C \in Q$, where $h_{M_1}$ and $h_{M_2}$ are the pixel intensity filtering functions of $M_1$ and $M_2$ respectively.

\paragraph{Discussion.} By definition of Cartesian product, we have that
$Q_{s,t} 
= 
\bfh^{-1} \big( (-\infty, a_s] \times (-\infty, b_t] \big)$ 
is equivalent to 
$Q_{s,t} = 
h_{M_1}^{-1} \big((-\infty, a_s] \big) \cap h_{M_2}^{-1} \big( (-\infty, b_t] \big)$.
So each column of the Euler characteristic surface $S_\bfh$ equals the Euler characteristic curve of $h_{M_1}$ with $Q$ restricted to its top-dimensional cubes $C$ such that $h_{M_2}(C) \leq b_t$, because of the intersection with the cubical complex $h_{M_2}^{-1}\big((-\infty, b_t] \big)$.
Thus, a n\"aive approach for computing the $S_\bfh$ is to compute Euler characteristic curves multiple times with algorithms looping on the cubes in $Q$ \cite{streamECC}.
To improve over this, Algorithm \ref{alg:images} makes use of the following two strategies:
\begin{itemize}
    \item[\emph{(i)}] Precompute the possible Euler characteristic changes produced by adding a top-dimensional $C$ into any $Q_{s,t}$, and use these to increase or decrease the values of $S_\bfh$;
    \item[\emph{(ii)}] Loop on each top-dimensional $C$ only once, by modifying all columns of $S_\bfh$ where $C$ produces the same change at the same time.
\end{itemize}
In the following discussion, points \emph{(i)} and \emph{(ii)} above are shown to preserve the correctness of the na\"ive approach computing columns of $S_\bfh$ independently.

Using Euler characteristic changes as suggested in \emph{(i)} is possible because the process of going from the empty abstract cubical complex to $Q = Q_{255,255}$ can be decomposed into steps at which a single $\bar{C}$ and its subfaces are added.
This follows from the definition of the filtering functions $h_{M_1}$ and $h_{M_2}$ in terms of pixel (voxel) intensity values.
Furthermore, at each such step, the change $\change$ in Euler characteristic of the current cubical complex is completely determined by the structure of elements adjacent to $\bar{C}$.
More precisely, defined the \emph{neighbourhood} $\neigh$ of $\bar{C}$ to be the set of cubes that intersect it, by Definition \ref{def:euler_char} $\change$ only depends on the numbers of cubes added into $\neigh$ when $\bar{C}$ is added.

\begin{figure}[tb]
\centering
\begin{minipage}{0.9\textwidth}
\begin{algorithm}[H]\caption{Euler characteristic surface of images.}
	\label{alg:images}
	 \hspace*{\algorithmicindent}{\textbf{Input:} gray-scale images $M_1, M_2$, $\mathbf{h}: Q \rightarrow [0, m_1] \times [0, m_2] \subseteq \mathbb{R}^2$, and the pre-computed vector \textit{preCompChanges}.}
	 \\
	    \begin{algorithmic}[1] 
	    \caption{Euler characteristic surface of bi-filtration on a pair of images.}
		\STATE Add a one pixel (voxel) thick outer layer to images, so that the new boundary pixels (voxels) are mapped by $\bfh$ into $(m_1+1, m_2+1)$
		\STATE $S_\bfh \gets\ (m_1+1)\times (m_2+1)$ zeros matrix
		\FOR{each top-dimensional cube $C$ in $Q_{M_1}$}
		    \STATE $a_s, b_t \gets h_{M_1}(C), h_{M_2}(C)$
  	    	\STATE $neigh_1, neigh_2 \gets h_{M_1}, h_{M_2}$ values in neighbourhood of $C$ 
  		    \STATE $thresholds_2 \gets$ sorted values in $neigh_2$ greater than $b_t$, union $m_2+1$
  		    \STATE $N_1^{C} \gets$ boolean matrix defined by $(neigh_1 \leq a_s)$ before ${C}$ and $(neigh_1 < a_s)$ after ${C}$
  		    \FOR{$k = 1$ to $|thresholds_2|$}
  			 	\STATE $N_2^{C} \gets$ boolean matrix defined by $(neigh_2 \leq thresholds_2[k-1])$
  			 	\STATE $N^{C} \gets$ element-wise AND of $N_1^{C}$ and $N_2^{C}$
  			 	\STATE $l \gets$ decimal integer of binary representation of $N^{C}$
  			 	\FOR{$\hat{t}=$ index of $thresholds_2[k-1]$ to index of $thresholds_2[k]-1$}
  			 	    \STATE $S_\bfh[s][\hat{t}]$ += \textit{preCompChanges}[$l$]
  			 	\ENDFOR
  			 	
  			 \ENDFOR
  		\ENDFOR 
  		\STATE $S_\bfh \gets$ cumulative sum on columns of $S_\bfh$
  		\STATE \textbf{return} $S_\bfh$
	    \end{algorithmic}
	\end{algorithm}
\end{minipage}
\end{figure}

All possible Euler characteristic changes can be precomputed because there is a finite number of neighbourhoods $\neigh$.\footnote{
For two-dimensional images, $\neigh$ is a set of $8$ squares and their subfaces, while for three-dimensional images it is a set of $26$ cubes and their subfaces.}
In particular, there are $2^{(3^d-1)}$ such neighbourhoods in dimension $d$, meaning that there are $256$ Euler characteristic changes to precompute for two-dimensional images and $67,108,864$ changes for three-dimensional images.
For $d=4$, the number of possible neighbourhoods is already a $25$ digits integer, making the computation and storage of their corresponding changes impractical.
Hence Equation \eqref{eq:euler_char} can be used to compute all the Euler characteristic changes for $d=2$ and $d=3$, which can then be stored in a vector $preCompChanges$ using the binary representation of neighbourhoods to index them.
For example, consider the neighbourhood in Figure \ref{fig:local1} corresponding to the binary matrix
\begin{equation}
    \begin{pmatrix}
        1 & 0 & 1 \\
        0 & 0 & 0 \\
        1 & 0 & 1
    \end{pmatrix},
\end{equation}
and in turn to the binary sequence $10100101$.
Its Euler characteristic change is $-3$ and the decimal representation of its binary sequence $165$.
Thus $-3$ is stored as the $165$-th element of $preCompChanges$.

Point $\emph{(ii)}$ above is realized by the inner loop on lines $8-15$ of Algorithm \ref{alg:images}, where $a_s=h_{M_1}(\bar{C})$ and $b_t=h_{M_2}(\bar{C})$ so that $Q_{s,t}$ is the first complex including $\bar{C}$.
The idea is to use $preCompChanges$ to update the $s$-th row of $S_\bfh$ at each iteration.
This can be done because $Q_{s,t} = h_{M_1}^{-1} \big((-\infty, a_s] \big) \cap h_{M_2}^{-1} \big( (-\infty, b_t] \big)$, so $\chi(Q_{s,t})$ and $\chi(Q_{s,t+1})$ can differ by a change $\change$ induced by $\bar{C}$ if and only if $\neigh$ in $Q_{s,t+1}$ has changed, i.e. if there is a top-dimensional cube $C' \in N^{\bar{C}}$ such that $h_{M_2}(C') = b_{t+1}$.
But all such changes depend on the $h_{M_2}$ values of top-dimensional cubes in $\neigh$ greater than $b_t$.
Sorting and storing these in $thresholds_2$ with $m_2+1$ appended, it follows that the ranges of $\hat{t}$-th columns of $S_\bfh$ such that $\hat{t}$ is between two consecutive values of $thresholds_2$ are such that the Euler characteristic change induced by adding $\bar{C}$ is constant because $\neigh$ does not change.
So the elements of vector $preCompChanges$ can be used on line $13$ to update all $\hat{t}$ columns such that $\hat{t} \geq j$.

In conclusion, at the end of the loop on lines $3-16$, each entry $S_\bfh[s][t]$ equals the change $\chi(Q_{s,t}) - \chi(Q_{s-1,t})$, because all changes $\change$ induced by the top-dimensional $\bar{C}$ in $Q_{s,t} \setminus Q_{s-1,t}$ have been considered.
After the cumulative sum on columns of $S_\bfh$, it follows that
\begin{equation}
    \begin{aligned}
    \label{eq:cumulative_sum}
        S_\bfh[s][t]
        = & \Big( \chi(Q_{0,t}) - 
                  \chi(\emptyset) 
            \Big) 
          + \ldots 
          + \Big( \chi(Q_{s,t}) - \chi(Q_{s-1,t}) \Big) 
        \\
        = & 
        \chi(Q_{s,t}) - \chi(\emptyset)
        = 
        \chi(Q_{s,t}),
    \end{aligned}
\end{equation}
which is the required Euler characteristic surface entry.

Ignoring the time required to precompute the vector of Euler characteristic changes $preCompChanges$, Algorithm \ref{alg:images} has a worst case running time of $O(n m_2 + m_1 m_2 )$, where $n$ is the number of pixels (voxels) in $M_1$ and $M_2$.
This follows because the inner loop on lines $8-15$ takes $O(m_2)$ operations in the worst case to update an entire row.
However, compared to computing $m_2+1$ Euler characteristic curves as proposed by the na\"ive approach at the beginning of this discussion, $\neigh$ is computed only once for ranges of columns where it does not change, and entries $S_\bfh$ are incremented and decremented without having to count subfaces of top-dimensional cubes in $\neigh$.
\begin{figure}[hbt]
    \centering
    \begin{subfigure}[b]{0.45\textwidth}
        \includegraphics[width=\textwidth]{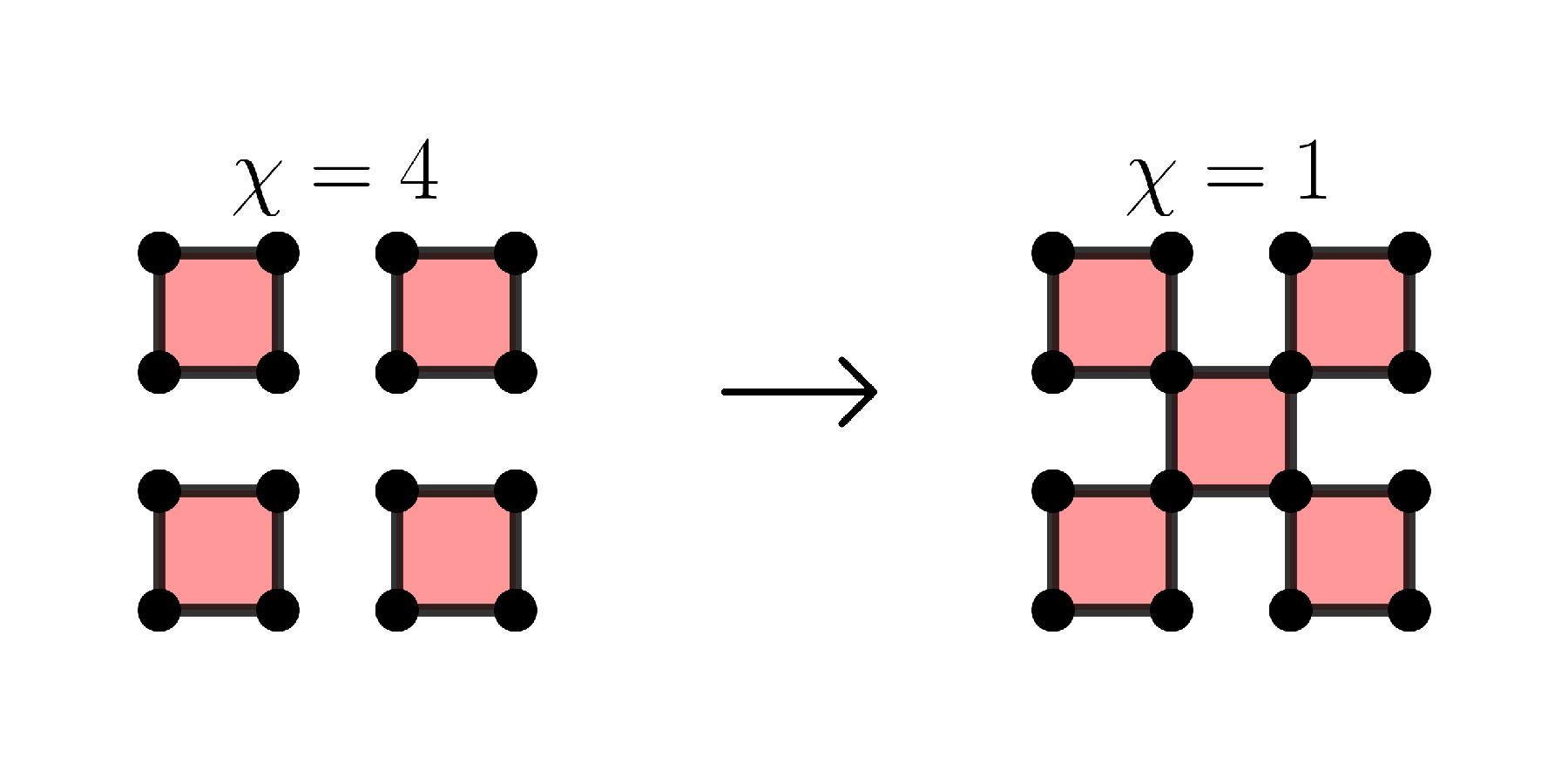}
        \caption{} 
        \label{fig:local1}
    \end{subfigure}
    \quad
    \begin{subfigure}[b]{0.45\textwidth}
        \includegraphics[width=\textwidth]{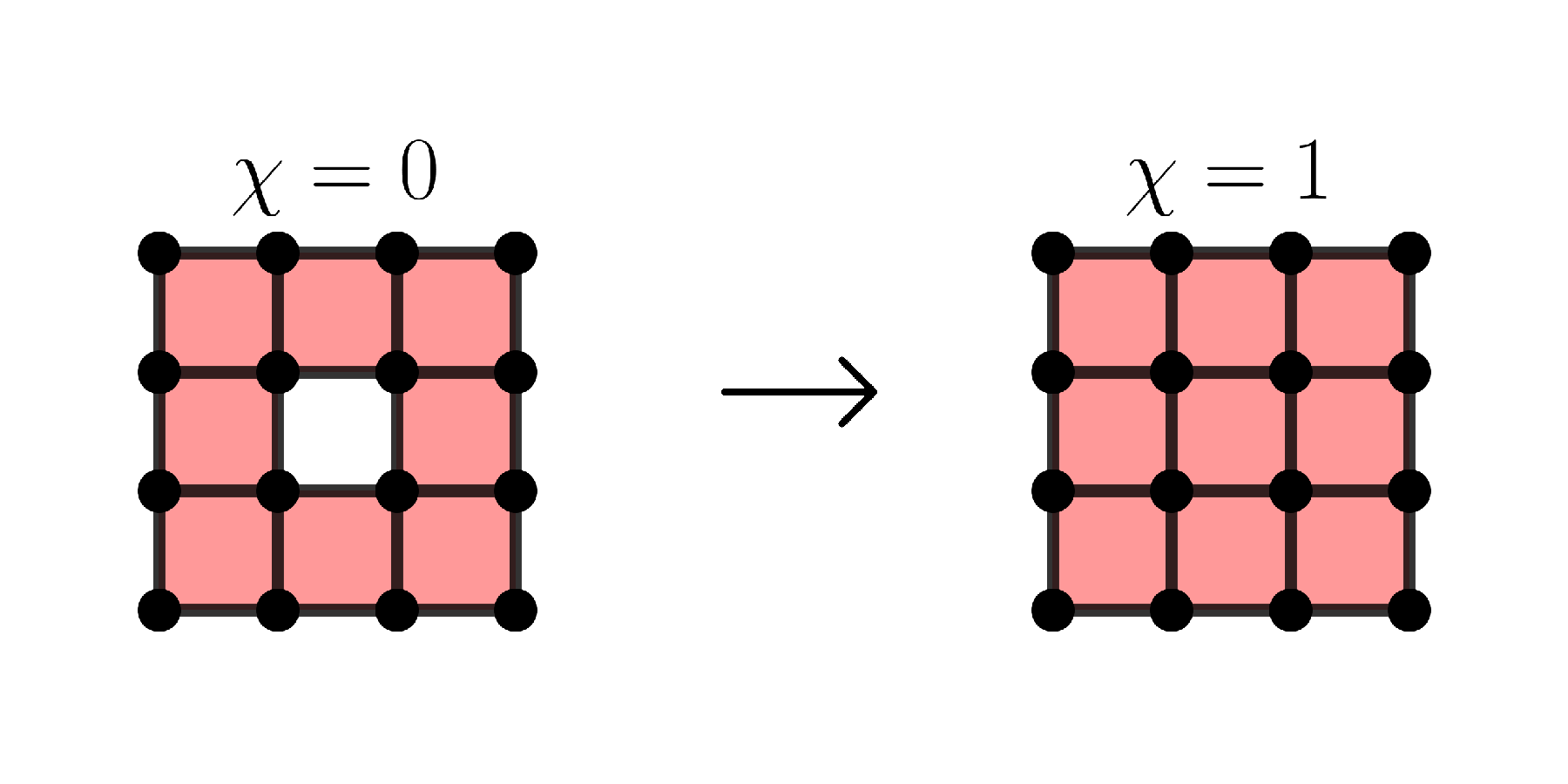}
        \caption{} 
        \label{fig:local2}
    \end{subfigure}
    \caption{Euler characteristic changes produced by adding a cube of maximal dimension in a two-dimensional cubical complex $Q_{s-1, t}$. In (a) the change is equal to $-3$, while in (b) it is $+1$.}
    \label{fig:local}
\end{figure}

\begin{figure}[bt]
\centering
\begin{minipage}{\textwidth}
\begin{algorithm}[H]
    \caption{Euler characteristic surface of bi-filtration on finite point set.}
	\label{alg:points}
	\hspace*{\algorithmicindent}{\textbf{Input:} abstract simplicial complex $K$, $\bfh = (h_1, h_2): K \rightarrow \mathbb{R}^2$, and sorted values in $\mathcal{R}_1$ and $\mathcal{R}_2$.} 
	 \\
	\begin{algorithmic}[1]
		\STATE $S_\bfh \gets$ $(m_1+1) \times (m_2+1)$ zeros matrix
		\FOR{each simplex $\sigma$ in $K$}
		    \STATE $v_1, v_2 \gets$  $h_1(\sigma)$, $h_2(\sigma)$
		    \STATE $a_s, b_t \gets$ minimum values greater than $v_1$, $v_2$ in $\mathcal{R}_1$, $\mathcal{R}_2$ with binary search
		    \FOR {$\hat{j} = j$ to $m_2$}
		    \STATE $S_\bfh[s][\hat{t}] \gets (-1)^{dim(\sigma)}$
		    \ENDFOR 
		\ENDFOR
		\STATE $S_\bfh \gets$ cumulative sum on columns of $S_\bfh$
		\STATE \textbf{return} $S_\bfh$
	\end{algorithmic}
\end{algorithm}
\end{minipage}
\end{figure}

\subsection{Point Data}
\label{subsec:point_data}

Given a finite point set $X$, which we assume being in general position, we provide Algorithm \ref{alg:points} for the computation of the Euler characteristic surfaces of a bi-filtration of a simplicial complex $K$ built onto $X$.
Moreover, it is assumed that this bi-filtration consists of sublevel sets of a $\bfh = (h_1, h_2): K \rightarrow \R^2$ on monotonically increasing sets of real values $\mathcal{R}_1$ and $\mathcal{R}_2$.

\paragraph{Discussion.} In this case, when a simplex $\sigma$ is added into a $K_{s,t} = h_1^{-1} \big((-\infty, a_s] \big) \cap h_2^{-1} \big( (-\infty, b_t] \big)$ its neighbourhood does not have a fixed structure.
Thus it is not possible to precompute Euler characteristic changes as in Algorithm \ref{alg:images}.
However, if $\sigma \in K_{s,t}$, then $\sigma \in K_{s, \hat{t}}$ for each $\hat{t} \geq t$.
So the change in Euler characteristic $(-1)^{dim(\sigma)}$, produced by adding $\sigma$ into $K_{s,t}$, also applies to $K_{s, \hat{t}}$ for each $\hat{t} \geq t$.
This property is used on line $6$ of Algorithm \ref{alg:points} to update the $s$-th row of $S_\bfh$ for each $\sigma$.
It follows that at the end of the loop on lines $2-8$ each entry $S_\bfh[s][t]$ equals $\chi(K_{s,t}) - \chi(K_{s-1, t})$, and the cumulative sum on columns of on line $9$ returns the desired Euler characteristic surface.

Differently from Algorithm \ref{alg:images} (where value $a_s$ is mapped to index $s$, and $b_t$ to $t$), it is necessary to find the indexes $s,t$ of $h_1(\sigma)$ and $h_2(\sigma)$ within the sorted values of $\mathcal{R}_1$ and $\mathcal{R}_2$.
With a binary search this operation takes $\log_2(m_1)$ and $\log_2(m_2)$ operations for $s$ and $t$ respectively.
Hence the worst-case running time of Algorithm \ref{alg:points} is  $O(n (log_2(m_1) + log_2(m_2) + m_2) + m_1 m_2)$, where $n$ is the number of simplices in $K$.

In the point set case, there is no inherent locality that can be exploited. In some cases, one could construct the complex locally, but ultimately this is an improved technique of constructing the complex rather than any improvement in computing the Euler characteristic, which counts the simplices as a sorted list.
We conclude this section with a remark.
\begin{remark}
The algorithms presented here are quite straightforward and we include them primarily for completeness. This simplicity also leads to them being exceptionally efficient. One important open question is whether algorithms can be made sublinear if we allow for approximations. This is particularly important for higher-dimensional parameter spaces, as the complexity rises exponentially in the dimension of the parameter space. 
\end{remark}

\section{Simulated \& Random Data}
\label{sec:experiments}
In this section, we first present the use of Euler surfaces for the classification for images and differenting between various random processes including random images and point processes. In Section ~\ref{sec:realworld}, we look at a real-world dataset, but here we are able to investigate the resulting surfaces when the underlying process is known. 

\subsection{Gray-scale Image Classification}
\label{subsec:img_clf}

Here we consider two standard benchmarking image classification databases: the \texttt{OUTEX\_TC\_00000} test suite \cite{ojala2002outex} and the MNIST database of handwritten digits \cite{lecun1998mnist}.
 
 \begin{figure}[htb]
    \centering
    \begin{subfigure}[b]{0.45\textwidth}
        \includegraphics[width=\textwidth]{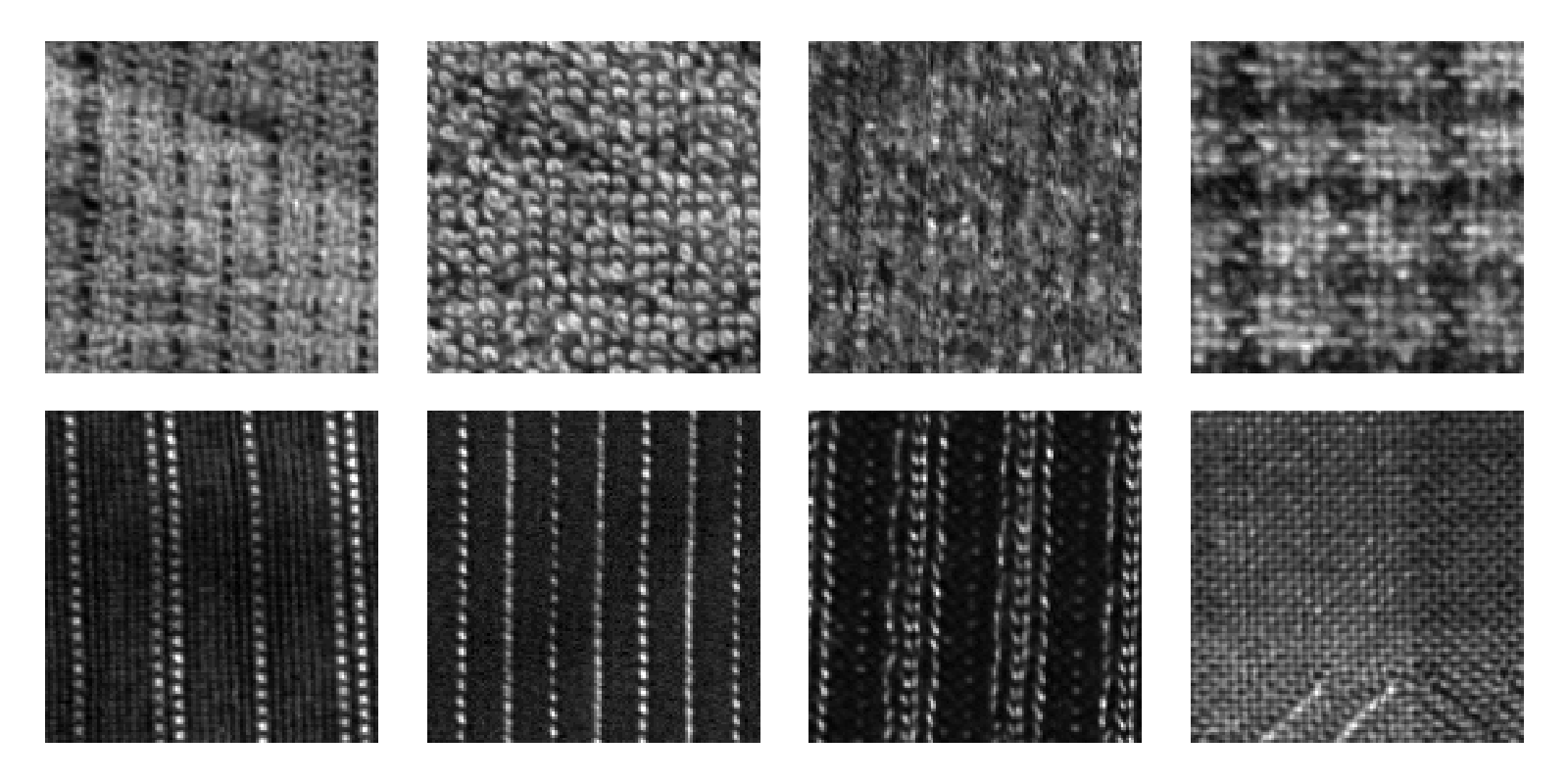}
        \caption{} 
        \label{fig:outex}
    \end{subfigure}
    \qquad
    \begin{subfigure}[b]{0.45\textwidth}
        \includegraphics[width=\textwidth]{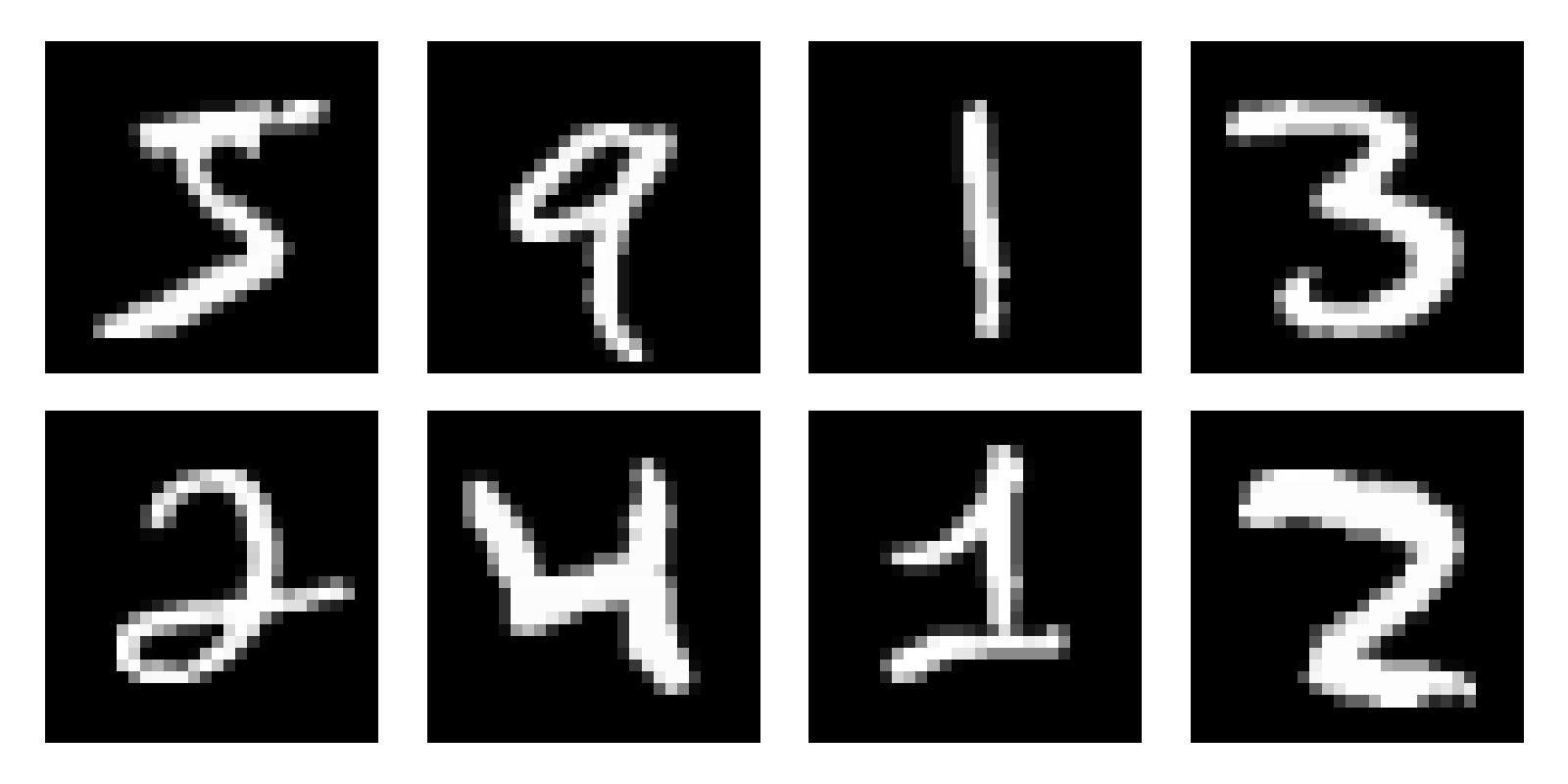}
        \caption{} 
        \label{fig:mnist}
    \end{subfigure}
    \caption{The images in (a) are eight of the twenty-four patterns in the \texttt{OUTEX\_TC\_00000} test suite. The images in (b) are a random selection of the $70,000$ handwritten digits from the MNIST database.}
\end{figure}

\begin{table}[tb]
    \caption{Classification results obtained with Euler characteristic based feature vectors and logistic regression.}
    \centering
    \begin{tabular}{lr}
        \toprule
        \multicolumn{2}{c}{\texttt{Outex\_TC\_00000}} \\
        \cmidrule(r){1-2}
        \textbf{Features} & \textbf{Avg test accuracy} \\
        \midrule
        Euler curve - pixel intensity & $91.08\pm 1.57$ \%  \\
        Euler surface - pixel intensity, Laplacian   & $\mathbf{96.29}\pm 1.24$ \% \\
        \bottomrule
        \toprule
        \multicolumn{2}{c}{MNIST} \\
        \cmidrule(r){1-2}
        \textbf{Features} & \textbf{Test accuracy} \\
        \midrule
        Euler curve - pixel intensity & $33.63$ \%  \\
        Euler surface - pixel intensity, top/bottom gradient   & $\mathbf{71.79}$ \% \\
        \bottomrule
    \end{tabular}
    \label{table:image_classify}
\end{table}

We test the performance of Euler characteristic curves versus Euler characteristic surfaces as feature vectors of logistic regression.
For this we use the implementation provided by the \texttt{scikit-learn} Python machine learning package \cite{pedregosa2011scikit}, together with a \texttt{lbfgs} solver. 
Each Euler characteristic curve is calculated directly on each gray-scale image $M_1$. On the other hand, to compute Euler characteristic surfaces we define a second image $M_2$ for each one in \texttt{Outex\_TC\_00000} and MNIST respectively.
For the former dataset, we set $M_2$ equal to the Laplacian of $M_1$, defined as the discrete convolution of $M_1$ with the kernel
$$\begin{pmatrix}
0 & -1 & 0 \\
-1 & 4 & -1 \\ 
0 & -1 & 0
\end{pmatrix}.$$
Instead, for the MNIST dataset of images, we define $M_2$ as the constant gray-scale image given by the top-down gradient of the same size of any MNIST image. Since MNIST images are $28\times 28$ in size, we have that $M_2[i][j] = \lfloor 255 \cdot \frac{i}{28} \rfloor$.

To obtain feature vectors for logistic regression, we subsample both Euler characteristic curves and surfaces by preserving only one element in $6$ for curves, and one row and column in $6$ for surfaces. Also, we concatenate the resulting rows of Euler characteristic surfaces. Finally the components of the resulting feature vectors are normalized to have mean $0$ and standard deviation $1$. 
The results are given in Table \ref{table:image_classify}, where we use average test accuracy as the scoring metric.
In both cases feature vectors derived from Euler characteristic surfaces result in higher classification scores.
By combining multiple sources of information Euler surfaces outperform Euler curves.
So, in setting in which data can be parameterized by more that one real-valued function, Euler surfaces are useful for encoding more information than Euler curves into feature vectors employed in classification tasks.

\subsection{Random Three-Dimensional Images}
\label{subsec:3d_images}

\begin{figure}[tb]
    \centering
    \begin{subfigure}[b]{0.4\textwidth}
        \includegraphics[width=\textwidth]{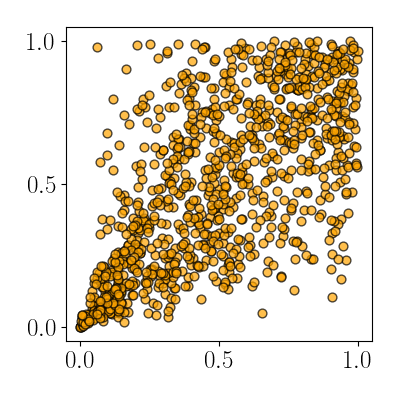}
        \caption{} 
        \label{fig:copula1}
    \end{subfigure}
    \qquad
    \begin{subfigure}[b]{0.4\textwidth}
        \includegraphics[width=\textwidth]{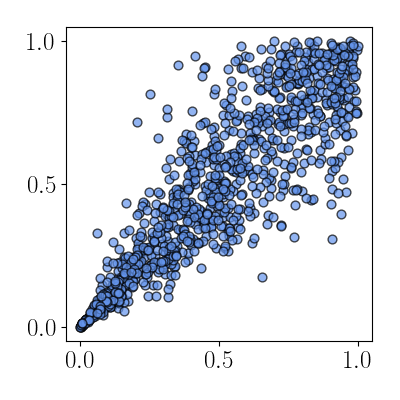}
        \caption{} 
        \label{fig:copula5}
    \end{subfigure}
    \caption{Points sampled from two Clayton copula distributions. In (a) $\theta = 1$, while in (b) $\theta = 5$.}
    \label{fig:copula_points}
\end{figure}
\begin{figure}[tb]
    \centering
    \centering
    \begin{subfigure}[b]{0.32\textwidth}
        \includegraphics[width=\textwidth]{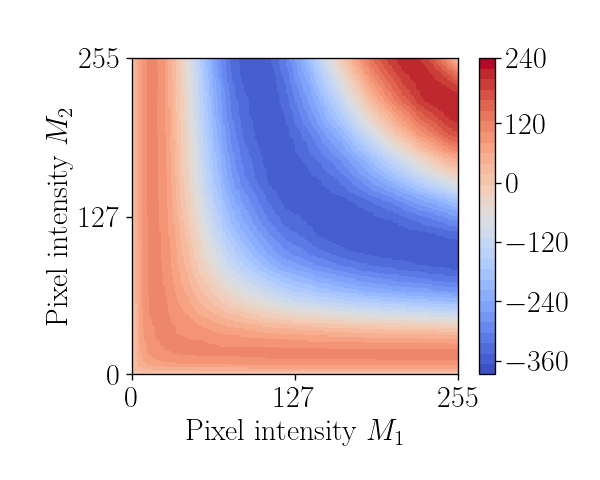}
        \caption{}
        \label{fig:copula-avg-surf-1}
    \end{subfigure}
    \begin{subfigure}[b]{0.32\textwidth}
        \includegraphics[width=\textwidth]{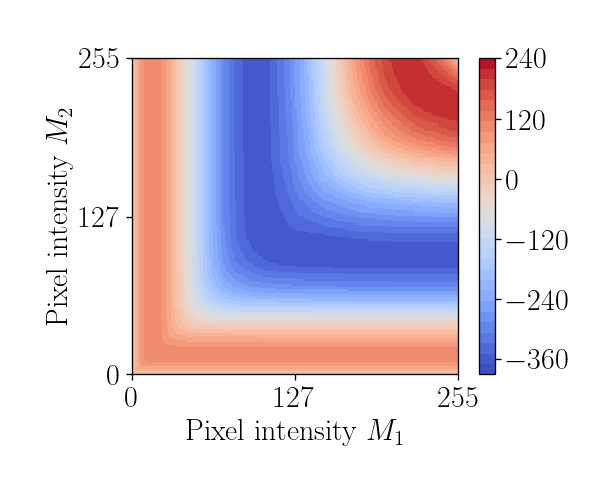}
        \caption{}
        \label{fig:copula-avg-surf-5}
    \end{subfigure}
    \begin{subfigure}[b]{0.32\textwidth}
        \includegraphics[width=\textwidth]{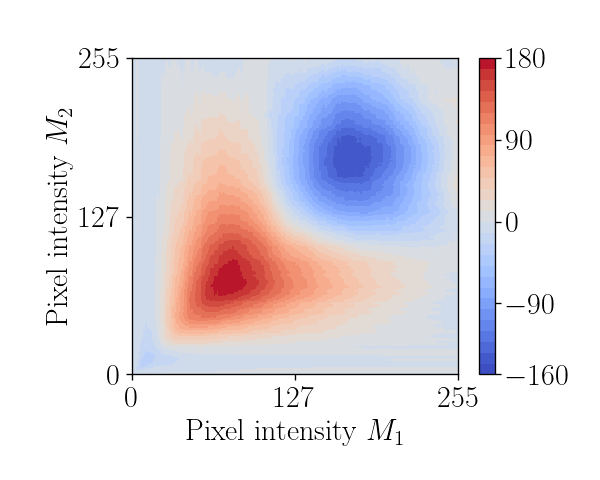}
        \caption{}
        \label{fig:copula-avg-surf-diff}
    \end{subfigure}
    \caption{Average Euler characteristic surfaces of random images generated with random points as in Figure \ref{fig:copula_points}. In (a) surface of images obtained with $\theta = 1$, and in (b) with $\theta=5$. The absolute value of the difference of the surfaces in (a) and (b) is in (c).}
    \label{fig:avg_copula_surf}
\end{figure}
The aim of this experiment is to provide empirical evidence of the conclusions of the example given in Section~\ref{sec:basics}. There we saw how the expected Euler characteristic surfaces of pairs of uniformly random images are able to distinguish between different levels of correlation $p$.
Here, we synthetically generate correlated pairs of uniformly random three-dimensional images. To have a well defined notion of correlation we employ copula distributions \cite{nelsen2007introduction}. 
In particular, we sample random points from a two-dimensional Clayton copula with uniform marginals $\mathcal{U}(0,256)$ as implemented by the \texttt{copula} R package, see \cite{hofert2018elements}. An example is given in Figure \ref{fig:copula_points}, where $1,000$ points were sampled from Clayton copula distributions with different $\theta$ parameters.

We make a pair of 3D random images $M_1$, $M_2$ of shape $n\times n \times n$, by mapping the coordinate values of $n^3$ points sampled from a Clayton copula in $\mathbb{R}^2$ to the voxel intensities of $M_1$, $M_2$.
Each $\theta$ results into a family of pairs of random 3D images.

We generate $100$ random pairs of 3D images with this method for both a Clayton copula distribution with $\theta = 1$ and with $\theta = 5$. 
The resulting $100$ Euler characteristic surfaces are averaged to obtain the surfaces in Figure \ref{fig:copula-avg-surf-1} and Figure \ref{fig:copula-avg-surf-5}. 
The absolute difference of the two is in Figure \ref{fig:copula-avg-surf-diff}.
As in the example in Section \ref{sec:basics} the Euler characteristic curves of any $M_1$ and $M_2$ are known to be have the same expected values.
On the other hand, the Euler terrain in Figure \ref{fig:copula-avg-surf-diff} clearly shows that the average Euler surfaces of different families of images distinguish between different values of $\theta$.

\subsection{Point Processes}
\label{subsec:point_proc}
Here we study the properties of Euler characteristic surfaces of finite point sets. We start by providing an example using Euler characteristic surfaces to encode the statistic properties of different point processes in $\mathbb{R}^2$. In particular, we consider 
\begin{itemize}
    \item[\textit{(i)}] A Poisson point process in the unit square $[0, 1]\times [0,1]$ of intensity $\lambda = 400$;
    \item[\textit{(ii)}] A Hawkes cluster process, as described in Section $3$ of \cite{kroese2013spatial}, whose cluster centers are generated as a Poisson process of intensity $\lambda = 280$. We take a offspring intensity function 
    $$\rho(x, y) = \frac{\alpha}{2\pi\sigma^2} \cdot e^{-\frac{1}{2\sigma^2} (x^2 + y^2)},$$
    with $\alpha = 0.3$ and $\sigma = 0.02$. Note that by the definition of Hawkes process and the fact that $(1-\alpha)400 = 280$, we have that the expected number of point generated by this process is $400$.
\end{itemize}

\begin{figure}[tb]
    \centering
    \begin{subfigure}[b]{0.4\textwidth}
        \includegraphics[width=\textwidth]{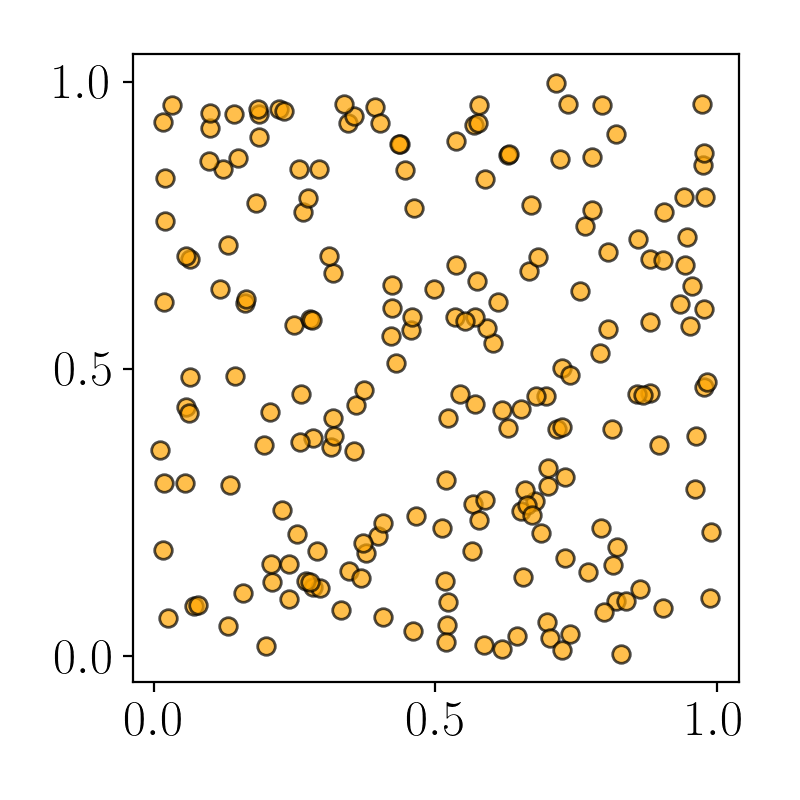}
        \caption{} 
        \label{fig:poisson}
    \end{subfigure}
    \quad
    \begin{subfigure}[b]{0.4\textwidth}
        \includegraphics[width=\textwidth]{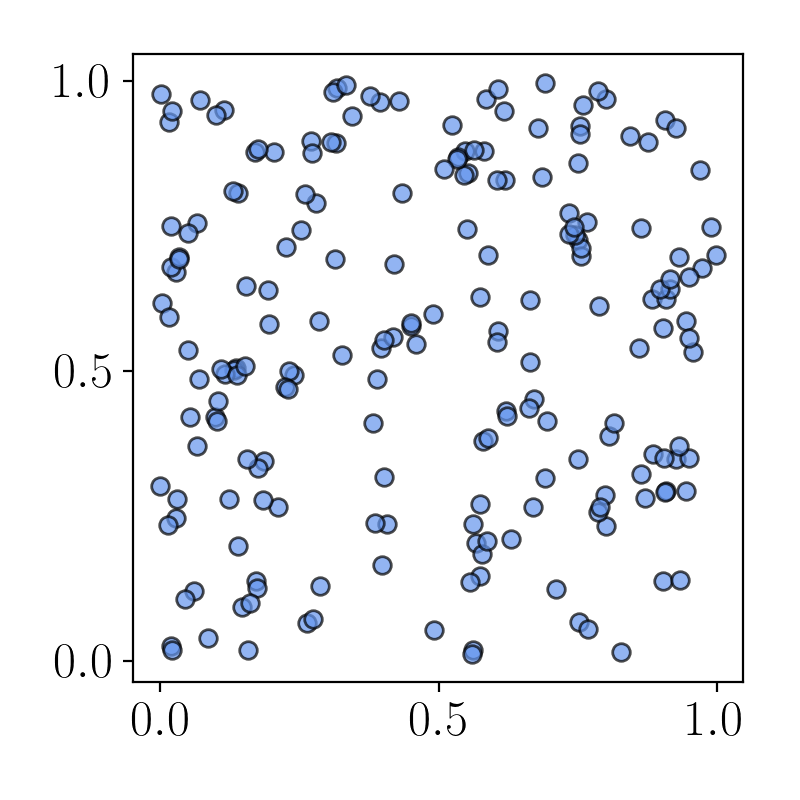}
        \caption{} 
        \label{fig:cluster}
    \end{subfigure}
    \caption{Point processes, corresponding to a Poisson point process in (a) and Hawkes cluster process in (b).}
    \label{fig:points_processes}
\end{figure}
\begin{figure}[tb]
    \centering
    \begin{subfigure}[b]{0.4\textwidth}
        \includegraphics[width=\textwidth]{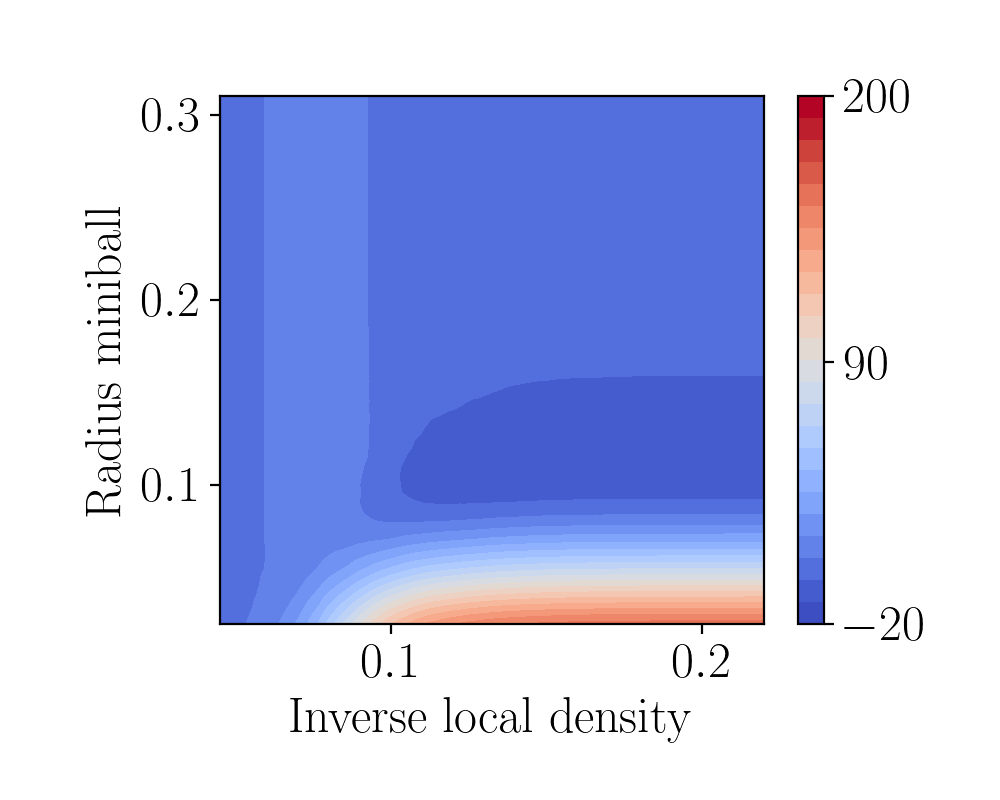}
        \caption{}
    \end{subfigure}
    \quad
    \begin{subfigure}[b]{0.4\textwidth}
        \includegraphics[width=\textwidth]{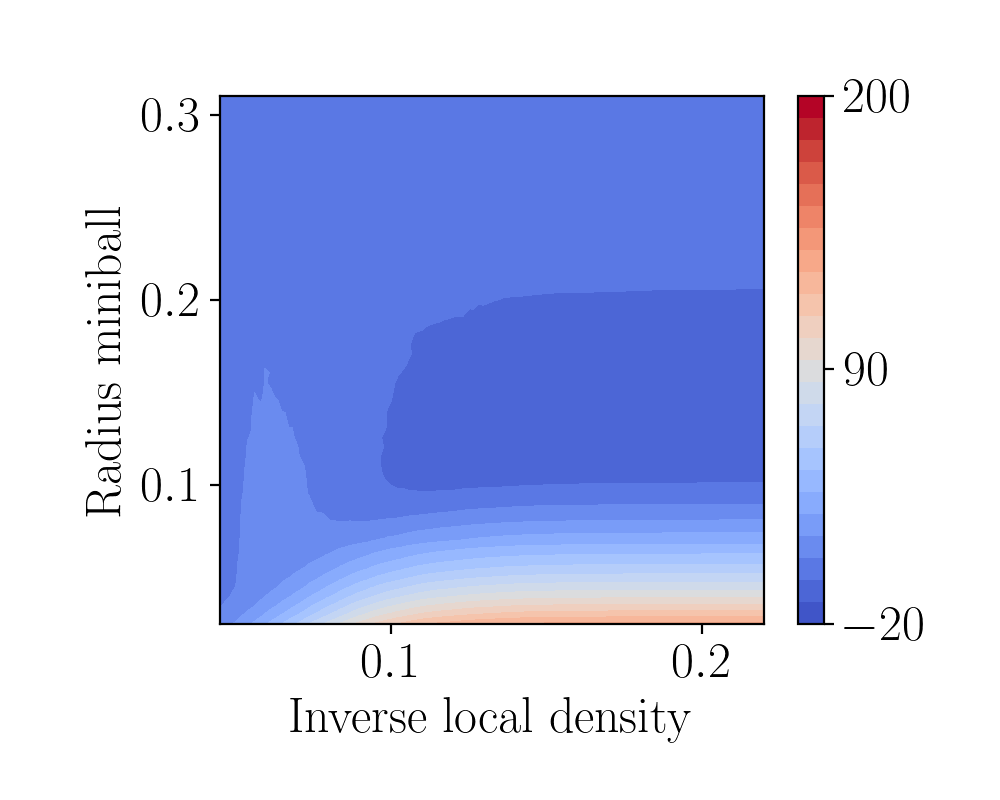}
        \caption{}
    \end{subfigure}
    
    \begin{subfigure}[b]{0.4\textwidth}
        \includegraphics[width=\textwidth]{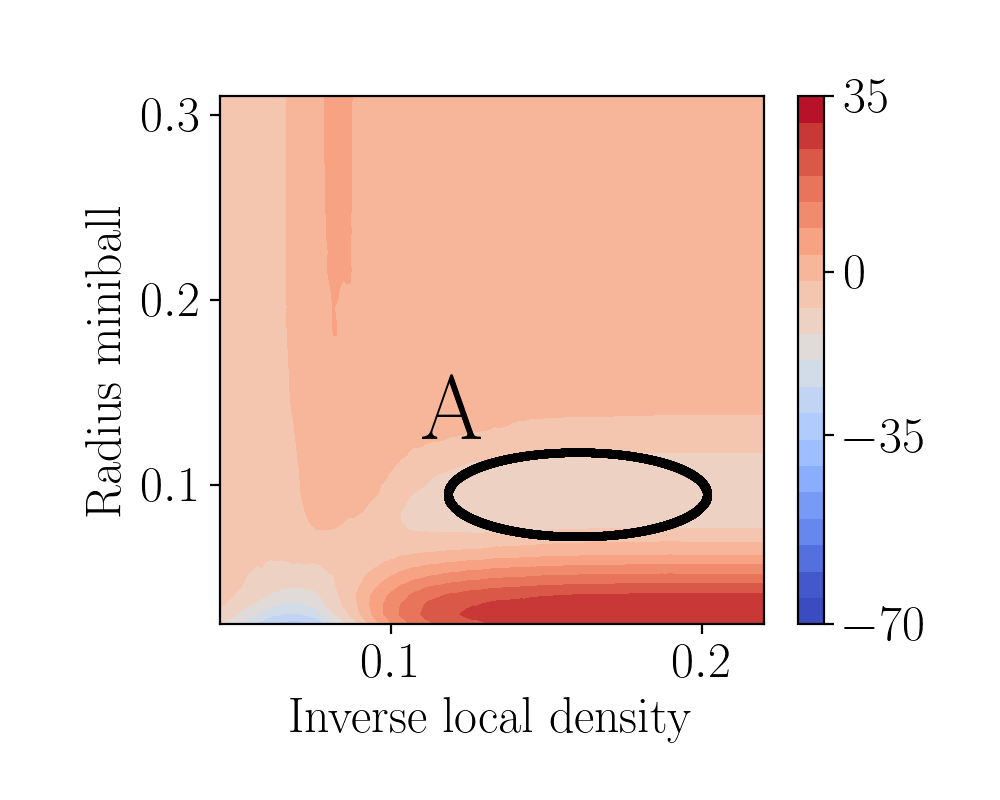}
        \caption{}
        \label{fig:points_avg_surf_diff}
    \end{subfigure}
    \quad
    \begin{subfigure}[b]{0.4\textwidth}
        \includegraphics[width=\textwidth]{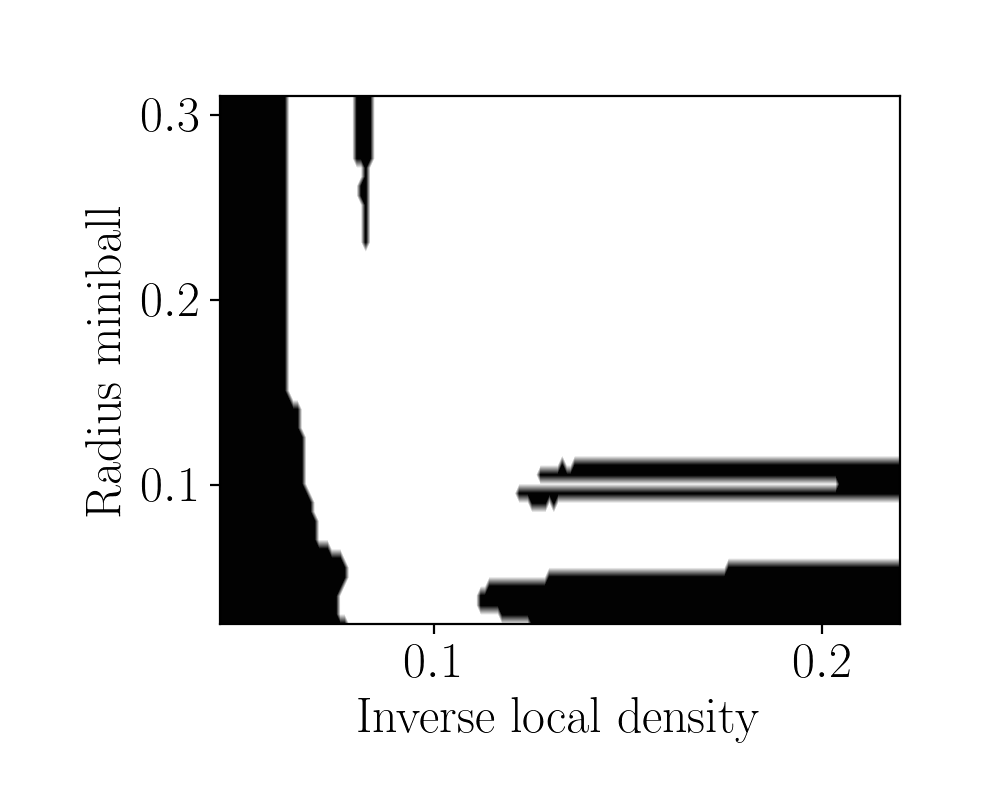}
        \caption{}
    \end{subfigure}
    \caption{Average Euler characteristic surfaces obtained from the point processes in Figure~\ref{fig:points_processes}. In (a) and (b) the average surfaces of points obtained from a Poisson and Hawkes cluster process respectively. In (c) the Euler terrain representing their difference. In (d) the black area represent regions of the parameter space where the two average surfaces in (a) and (b) are significantly different.}
    \label{fig:avg_points_surf}
\end{figure}

We generate $100$ finite point sets from both types of processes and define bi-filtering functions $\mathbf{h} = (h_1, h_2): D \rightarrow \mathbb{R}^2$ on the Delaunay triangulation $D$ of each of them setting
\begin{itemize}
    \item $h_1 = h_X$, the $\alpha$-filtration values (which is related to the distance to $D$) \cite{edelsbrunner1994three}; 
    \item $h_2(v) = \sqrt{\sum_{u\in U} \frac{\Vert v - u \Vert^2}{\vert U \vert}}$ for each vertex $v\in D$, where $U$ is the set of $k$-nearest neighbours of $v$, and $h_2(\sigma) = \max_{v\in \sigma} h_2(v)$ for each $\sigma\in D$. This filtering function estimates the inverse of the density at each vertex $v$, and extends it with the maximum to higher-dimensional simplices.
\end{itemize}
The resulting average Euler characteristic surfaces are in Figure \ref{fig:avg_points_surf}. Their absolute value difference in Figure \ref{fig:points_avg_surf_diff}.  In Figure~\ref{fig:point_process_diff}, we show the complexes at built on two instances of the cluster process and two instances of the Poisson process, at parameters where the Euler surfaces differ and the difference can clearly be seen.  We later use this same approach to understand a real data set.




\begin{figure}[!htbp]
    \centering
    \begin{subfigure}{.24\textwidth}    
        \centering  
        \hspace*{-0.9cm}\includegraphics[width=1.1\textwidth]{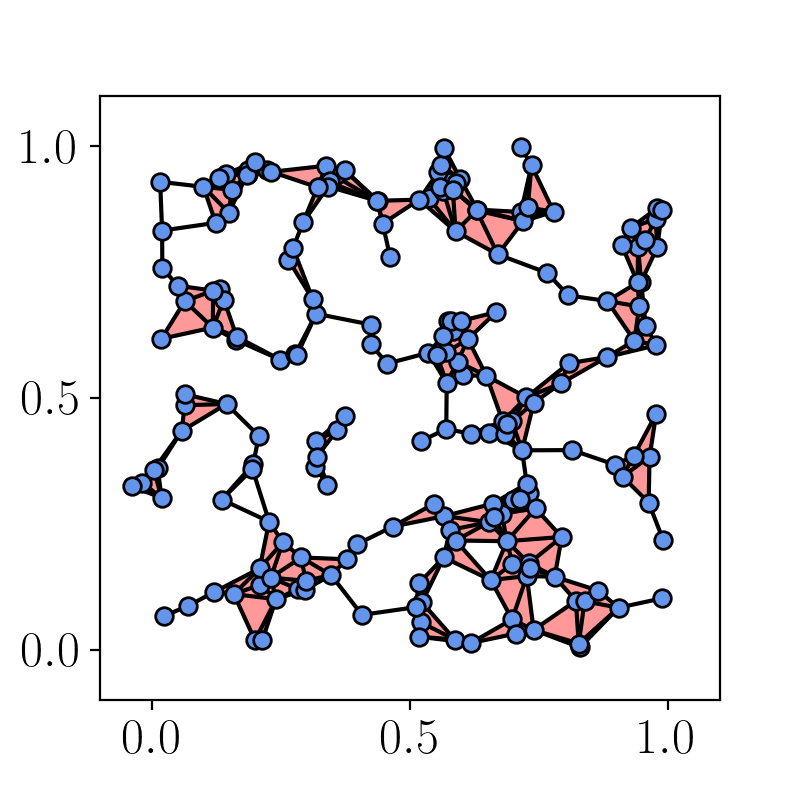}  
    \caption{}
    \end{subfigure}
    \begin{subfigure}{.24\textwidth}      
        \centering
        \hspace*{-0.5cm}\includegraphics[width=1.1\textwidth]{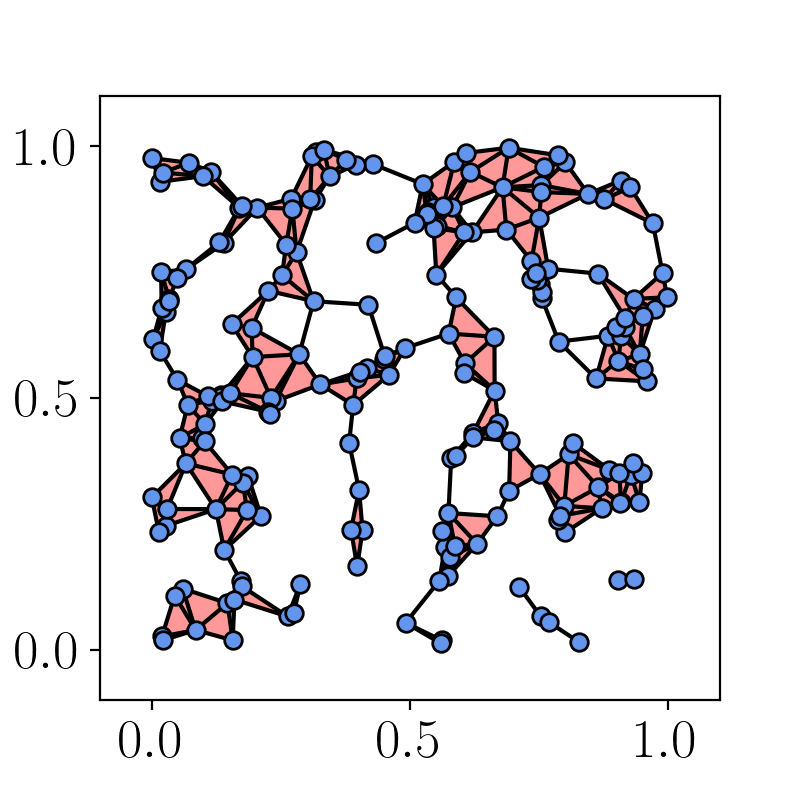}  
        \caption{}
        \end{subfigure}
    \begin{subfigure}{.24\textwidth}
        \centering \hspace*{-0.3cm} \includegraphics[width=1.1\textwidth]{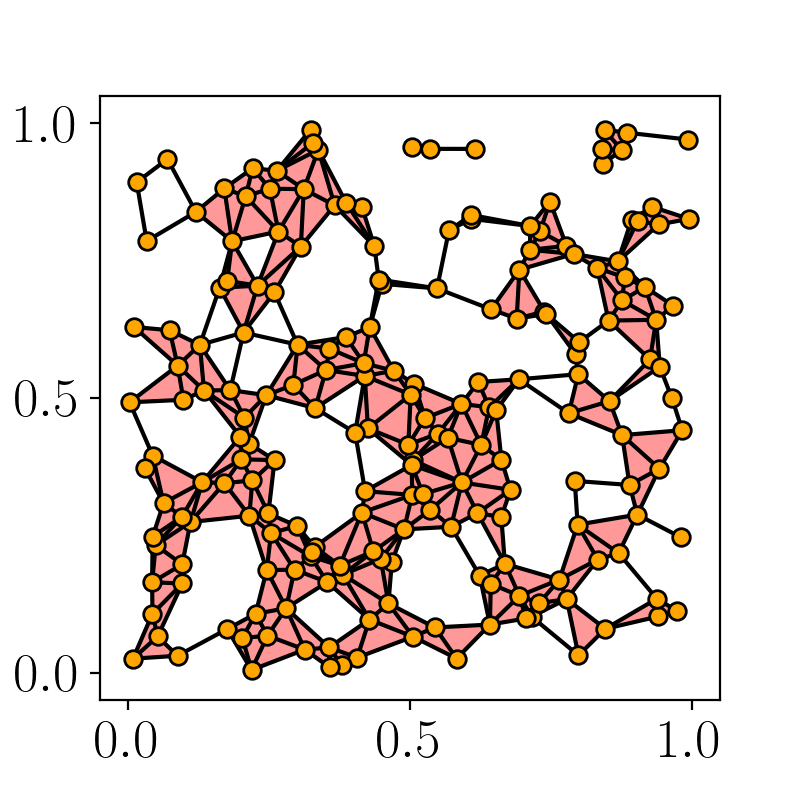}  
    \caption{}
    \end{subfigure}
    \begin{subfigure}{.24\textwidth}      
        \centering  \includegraphics[width=1.1\textwidth]{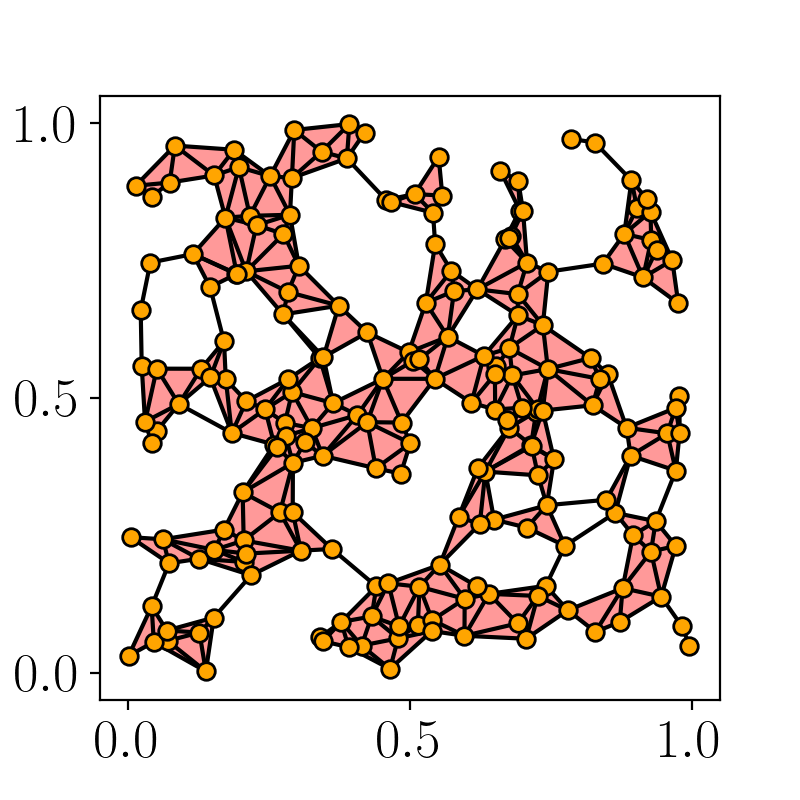}  
        \caption{}
        \end{subfigure}
    \caption{Examples of the complex built at the radius and density corresponding to Region A in Figure~\ref{fig:avg_points_surf}(c) for a cluster process (a,b) and the Poisson process (c,d)}
    \label{fig:point_process_diff}
\end{figure}

\section{Application to Detection of Diabetic Retinopathy}
\label{sec:realworld}





Here we present an application of the techniques to a real-world medical dataset on diabetic retinopathy. First, we introduce the dataset and provide evidence that the Euler characteristic curve is useful for detecting the condition using Euler characteristic curves (ECC). As the number of samples is small, the extension to surfaces is not the primary goal. Rather, having established that the Euler characteristic does capture the relevant information, we come to one of the main contributions of this paper. We use the surfaces to identify regions of parameter space which distinguish normal samples from non-normal samples. This kind of visualization is highly useful, as it is simple to then look at the underlying images and can be used to demonstrate the relevant direction/super or sub level set where difference between healthy and ill cases.  

\paragraph{Diabetic retinopathy (DR)} is a common consequence of diabetes~\cite{tey2019optical, ekoe2008epidemiology}, and a leading cause of blindness worldwide~\cite{nhs2020dr}. The disease causes a degeneration of blood vessels in the retina -- Figure~\ref{fig:comp}(a)~and~(c) shows a comparison. The signs of DR are intuitive to trained physicians, but not easy to detect computationally. We found that Euler characteristic based features are effective in detecting diabetic retinopathy on 2 different datasets (NHS Lothian and OCTAGON~\cite{diaz2019automatic}) of size $N=51$ and $N=43$ respectively. As input, we have images of the blood vessels in the eye from different patients via a method called Optical Coherence Tomography Angiography. The images can be thought of as a grayscale images with an intensity value where higher values correspond to the presence of blood vessels. In many applications, these blood vessels are first extracted as a graph or tree and then certain graph features are used as input for classification algorithms \cite{yao2020quantitative,giarratano2020framework}.  

\myparagraph{OCTA.} Optical Coherence Tomography Angiography (OCTA) has been one of the most important prospective imaging modalities in the retinal imaging domain over the last few years \cite{jia2012split, li2018quantitative}. It enables the visualisation of retinal blood vessels in a rapid and non-invasive way \cite{li2018quantitative}. Even though OCTA is not the established modality for diagnosing DR in the clinical practice, it has the potential to help doctors diagnose patients at the very early stages of DR, which are crucial for prevention and treatment. There are early signs of DR which are more likely to be observed on OCTA images before they are apparent on fundus examination \cite{thompson2019optical}. Examples of OCTA images of healthy patients (Controls), patients with DR and patients with Diabetes who have not developed DR (NoDR) are provided in Figure \ref{fig:comp}.



\myparagraph{Overview of experimental results with ECC.}
We first compare the ECC between healthy and diseased patients on 2 different OCTA datasets, coming from 2 different imaging devices (NHS Lothian (Optovue RTVue XR Avanti system (Optovue Inc., Fremont, CA)), $N=51$ and OCTAGON (DRI OCT Triton system (Topcon Corp, Tokyo, Japan)), $N=43$). The input images have resolution $304 \times 304$ and $320 \times 320$ for NHS Lothian and OCTAGON respectively. 


Using the ECC based on the intensity levels (sub and superlevel sets), we apply the method to 2 image classification tasks: 2 class classification (Control vs. DR) and 3 class classification (Control vs. NoDR vs. DR). As seen in Table \ref{fig:2_class_classification_results} and \ref{tab:control_dr_nodr}, the ECC performs better than a baseline of 2 biomarkers and comparable to state-of-the-art approaches. In particular, we achieve AUC of $0.88$ in the Control vs. DR study for NHS Lothian, $0.91$ for OCTAGON and AUC of $0.80$ for Control, $0.70$ for NoDR and $0.86$ for DR in the 3-class study (NHS Lothian). Most notably, we achieve accuracy of $81 \%$ for NoDR, which is the hardest class to classify due to the lack of well-defined signs in the image and this accuracy is better than the transfer learning approach.



\myparagraph{Overview of experimental results with ECS.} 
Due to the small sample sizes, we do not attempt classification with Euler surfaces (as the curves already perform well). Rather, we use 2-parameter ECS to look for new topological features. 
Using the same  OCTA datasets as above, we use the image intensity as our first function.  For the second function, we used:
\begin{enumerate}
    \item the complement function as illustrated in \ref{example_level_sets}, e.g. each point represents intensities in a range,
    \item the radial gradient image as seen in Figure \ref{fig:rgi}. 
\end{enumerate}
The rationale for the first one is to identify useful intervals which identify the illness, where a range of intensities approximately identify the blood vessel thicknesses, as thicker blood vessels correspond to higher intensities. The second function is to identify if there is a distance from the center of the retina image (which mostly coincides with the center of the FAZ) where blood vessels begin to behave differently. These choices are not exhaustive but demonstrate one of the main advantages to our approach -- they are readily interpreted in the context of the data, i.e. they have a clear physiological interpretation.

 To test the approach, we construct two separate pipelines (\textit{level set pipeline} and \textit{radial gradient pipeline}), depending on the function used. As seen in Figures \ref{octagon_landscape_regions}, we identify regions of interest with resulting topological biomarkers for each pipeline, for which we calculate a correlation coefficient with 2 known biomarkers. We detect that one of these biomarkers, vessel density, is strongly correlated with Pearson correlation coefficient of $0.86$ with EC from the level set pipeline. Moreover, we suggest a new topological EC biomarker which has not been reported in the literature before, moderately correlated with FAZ area with Pearson correlation coefficient of $0.55$, which is the end discovery result of the radial gradient pipeline.


\begin{figure}[!htbp]
\centering
\begin{subfigure}[b]{.3\linewidth}
\includegraphics[width=\textwidth]{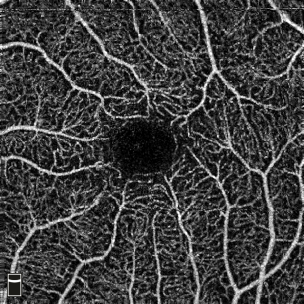}
\caption{Control: No disease}
\label{control_example_image}
\end{subfigure}%
\hfill
\begin{subfigure}[b]{.3\linewidth}
\includegraphics[width=\textwidth]{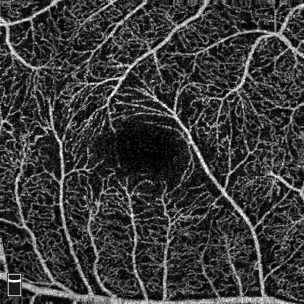}
\caption{NoDR: Diabetes, but no retinopathy.}
\end{subfigure}
\hfill
\begin{subfigure}[b]{.3\linewidth}
\includegraphics[width=\textwidth]{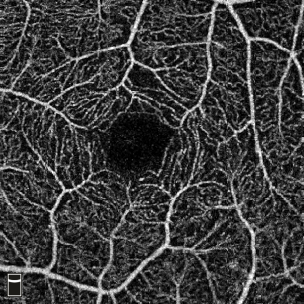}
\caption{DR: Diabetes with retinopathy.}
\label{dr_example_image}
\end{subfigure}
\caption{OCTA scans from Control, DR and NoDR patients. Changes to the microvasculature are apparent with disease progression. For example, the vessel density reduces and the foveal avascular zone (FAZ), which is the black regions in the middles of the image, is enlarged and distorted with less circular shape.} \label{fig:comp}
\end{figure}

\subsection{ECC classification study}

We provide the first study of the global topological structure of the OCTA retinal images via the ECC. As mentioened above, we take the filtering function to be the pixel intensity value. We calculate the EC for all the possible pixel threshold values, which in the case of gray-scale images are 256. The ECC is then used as a feature vector  for classification.

\myparagraph{2-class classification (Control vs. DR)}W e use for comparison a baseline of biomaker measurements for two known biomarkers associated with disease progression, vessel density (VD) and FAZ area, similar to the study in \cite{sandhu2018automated}, where they use 3 biomarkers (VD, FAZ area and vessel calibre). Moreover, we further compare it with the state-of-the-art deep learning approaches to patient classification. A VGG16 architecture with transfer learning was used as described in \cite{andreeva2020dr} to classify the same OCTA images.

\begin{table}[!htbp]
    \begin{minipage}{.5\linewidth}
    \label{tab:nhslothian_control_dr}
      \centering
        \begin{tabular}{c  c  c  c}
            \hline
            & \multicolumn{2}{ c }{\textbf{NHS Lothian, Control vs. DR}} \\ 
            \cline{1-4}
            & \textbf{Baseline} & \textbf{Our approach} &   \textbf{VGG16}\\
            Overall Acc  & $0.72 \pm 0.03$ &   $0.81 \pm 0.04$ &   \textbf{0.84 $\pm$ 0.07}\\
            Sen (Control) & \textbf{0.94 $\pm$ 0.05} &  \textbf{0.94 $\pm$ 0.05} &  $0.88 \pm 0.07$\\
            Spe (Control) & $0.30 \pm 0.06$ &  $0.60 \pm 0.06$  &  \textbf{0.77 $\pm$ 0.09}\\
            AUC  & $0.75 \pm 0.06$ &  \textbf{0.88 $\pm$ 0.03} &  \textbf{0.88 $\pm$ 0.12}\\ \hline 
        \end{tabular}
    \end{minipage}%
    \begin{minipage}{.5\linewidth}
      \centering
        \label{tab:octagon_control}
        \begin{tabular}{c  c  c  c}
            \hline
            & \multicolumn{2}{ c }{\textbf{OCTAGON, Control vs. DR}} \\ 
            \cline{1-4}
            & \textbf{Baseline} & \textbf{Our approach} &   \textbf{VGG16}\\
            & $0.82 \pm 0.04$ &   \textbf{0.87 $\pm$ 0.04} &   $0.84 \pm 0.07$\\
            & $0.87 \pm 0.04$ &  0.96 $\pm$ 0.04 &  \textbf{1.00 $\pm$ 0.00}\\
            & \textbf{0.71 $\pm$ 0.08} &  \textbf{0.71 $\pm$ 0.08} &  $0.53 \pm 0.20$\\
            & $0.87 \pm 0.04$ &  0.91 $\pm$ 0.03 &  \textbf{0.94 $\pm$ 0.06}\\ \hline 
        \end{tabular}
    \end{minipage} 
        \caption{Table of classification performances in the Control vs. DR study}
        \label{fig:2_class_classification_results}
\end{table}

In Table \ref{fig:2_class_classification_results} we can see the classification performance in the 2-class task (Control vs. DR). Our approach performs better than the baseline in all metrics, but the VGG16 method with data augmentation outperforms for accuracy and specificity. The AUC in both cases is comparable and it has higher variance for VGG16. For OCTAGON, we observe that we achieve a 5\% accuracy improvement over the baseline and 3\% improvement over the transfer learning approach for OCTAGON. For the other metrics, the ECC approach is still better than the baseline, and the results are at worst comparable with VGG16.

\myparagraph{3-class classification (Control vs. NoDR vs DR).} We compared the results with the transfer learning approach applied to the same dataset with data augmentation. The classification statistics with data augmentation for VGG 16 are displayed in Table \ref{tab:control_dr_nodr}. 

\begin{table}[H]
\begin{center}
\resizebox{\columnwidth}{!}{
\begin{tabular}{c  c  c  c  c  c  c }
\hline
 & \multicolumn{2}{ c }{\textbf{Controls}} & \multicolumn{2}{ c }{\textbf{NoDR}} &  \multicolumn{2}{ c }{\textbf{DR}} \\ 
 \cline{1-7}
  & \textbf{Our approach} & \textbf{VGG16} & \textbf{Our approach} & \textbf{VGG16} & \textbf{Our approach} & \textbf{VGG16}   \\
  \textbf{ACC}  &   0.68 $\pm$ 0.05 & \textbf{0.78 $\pm$ 0.05} &  \textbf{0.81 $\pm$ 0.04} &  $0.72 \pm 0.04$ & \textbf{0.76 $\pm$ 0.02} & \textbf{0.77 $\pm$ 0.04} \\
  \textbf{SEN} &   $0.79 \pm 0.05$ & \textbf{0.90 $\pm$ 0.05} &  \textbf{0.56 $\pm$ 0.06} & $0.20 \pm 0.13$ &   0.26 $\pm$ 0.14 &  \textbf{0.55 $\pm$ 0.11} \\
  \textbf{SPE} &  0.57 $\pm$ 0.09 & \textbf{0.67 $\pm$ 0.11} &  \textbf{0.90 $\pm$ 0.04} &  0.88$\pm$ 0.05 & \textbf{0.90 $\pm$ 0.02} &  $0.86 \pm 0.05$    \\ 
  \textbf{AUC} & 0.80 $\pm$ 0.04 & \textbf{0.90 $\pm$ 0.15} & \textbf{0.70 $\pm$ 0.04} &  0.67$\pm$ 0.28 & \textbf{0.86 $\pm$ 0.06} & $0.75 \pm 0.22$    \\
  \hline
\end{tabular}}
\end{center}
\caption{Table of classification performances with transfer learning}
\label{tab:control_dr_nodr}
\end{table}

Our results are comparable to the results with data augmentation. Our approach demonstrates good performance for the NoDR class with the highest accuracy of $0.81$, followed by DR and then Control. This result signifies its suitability for early detection tool, as the NoDR images have the slightest of changes compared to DR.

\subsection{Biomarkers}

The predominant approaches for image analysis in the OCTA literature have centered around a small number of explainable candidate biomarkers as suggested by the clinical knowledge acquired \cite{ylenia2020network}. Indeed, quantifiable features can be extracted from the OCTA images which are important biomarkers for DR. Statistical studies have identified the usefulness of local and global metrics based on the morphology of the foveal avascular zone (FAZ) and vascular-based metrics as biomarkers for distinguishing between healthy and DR eyes. Examples of the former include FAZ area, FAZ contour irregularity \cite{khadamy2018update, takase2015enlargement}, while examples of the latter are vessel caliber (VC), fractal dimension (FD), tortuosity, vessel density (VD) and geometric features of the vascular network \cite{giarratano2020framework, alam2020quantitative, le2019fully, alam2019quantitative, sasongko2011retinal}. For a more detailed review, please refer to \cite{yao2020quantitative}. However, there can be up to 25\% differences in the measurements of one of the biomarkers known to be linked to DR (vessel density) and 24\% in foveal avascular zone (FAZ) area \cite{freiberg2016optical}, which is also an early biomarker for DR and is enlarged for Diabetic patients.



\myparagraph{Vessel Density (VD).}VD is the ratio of the parts of image which are taken by blood vessels to the entire image. VD measurements were obtained as described in \cite{giarratano2020automated}. For OCTAGON, OOF filter was used, while for NHS Lothian a U-Net approach was adopted due to the availability of manually labelled data. 



\myparagraph{FAZ area.}The FAZ area is measured by segmenting the FAZ (the black region in the middle as seen in Figure \ref{control_example_image}) and calculating the total area of the segmented region. As there exist different methods for segmenting the FAZ area and depending on the dataset and the availability of manually segmented data available, the FAZ area is calculated using two different methods. The first one is used for OCTAGON and uses the FAZ segmentation and area calculation as described in \cite{diaz2019automatic}. The second one is used for NHS Lothian and follows the methodology as in \cite{giarratano2020automated}. 


\subsection{Euler characteristic surface (ECS) for identifying biomarkers}


In order to identifying a particular area in the image to look at, we would develop a suitable second image for the ECS guided by the accumulated biomedical knowledge of biomarkers VD and FAZ. We would like to focus on either:
\begin{itemize}
    \item a pixel interval, which consists of vessels with pixel values in particular range; or
    \item restrict our attention to the FAZ area in the middle of the image.
\end{itemize}

For these two particular purposes, we will use the ECS with a carefully selected second image $M_2$. As we have seen previously, this representation is richer. 




\myparagraph{Level set pipeline.} The \textit{complement image} is the image in which each pixel value is subtracted from the maximum pixel value, 256 in the case of a gray-scale image. In Figure \ref{fig:ecs_complement} 
 we can see an example of the resulting ECS from taking a Control image $M_1$ in Figure \ref{fig:left_contol_original} and its complement to be $M_2$ as shown in Figure \ref{fig:middle_complement}.


\begin{figure}[!htbp]
    \centering
 
\begin{subfigure}[b]{.32\textwidth}      
    \includegraphics[width=\textwidth]{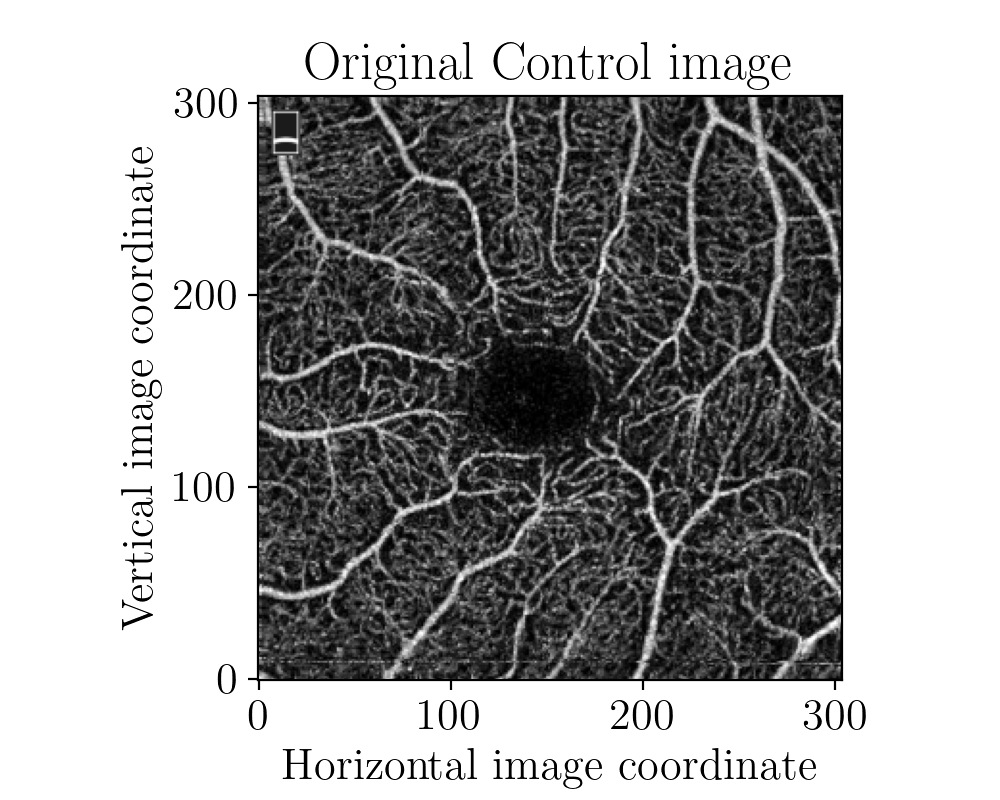}%
    \caption{Control image ($M_1$)}
    \label{fig:left_contol_original}%
\end{subfigure}
\hfill
\begin{subfigure}[b]{.32\textwidth}      
    \includegraphics[width=\textwidth]{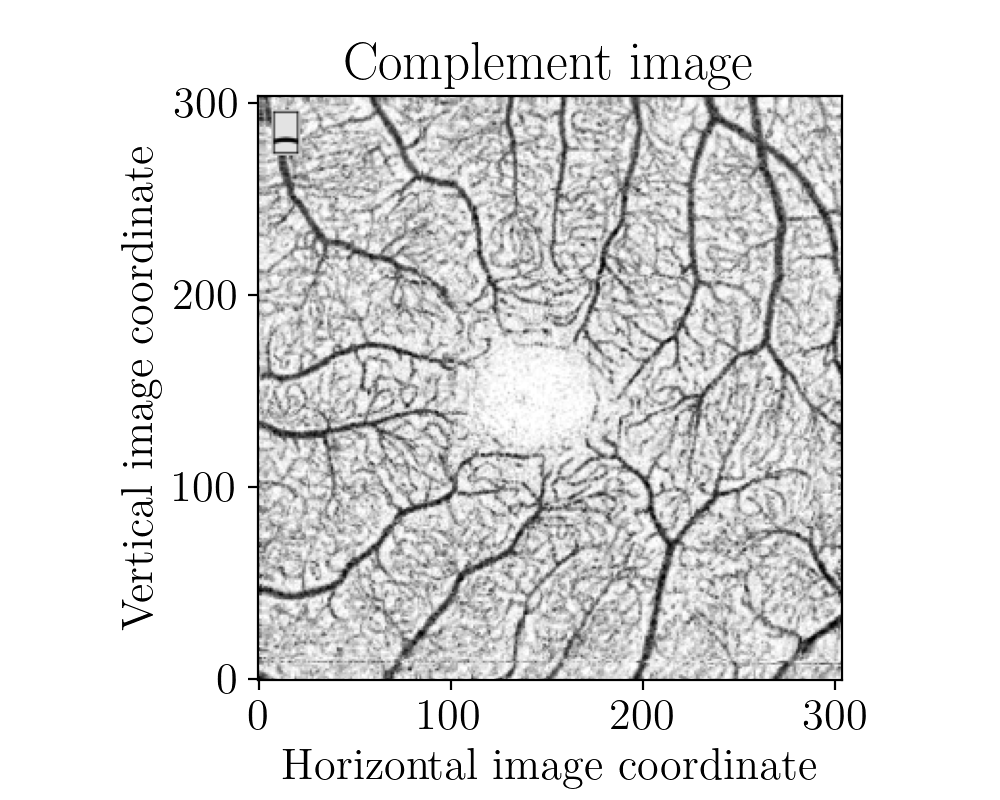}%
    \caption{Complement of control image ($M_2$)}
    \label{fig:middle_complement}%
\end{subfigure}
\hfill
\begin{subfigure}[b]{.32\textwidth}        
             \includegraphics[width=\textwidth]{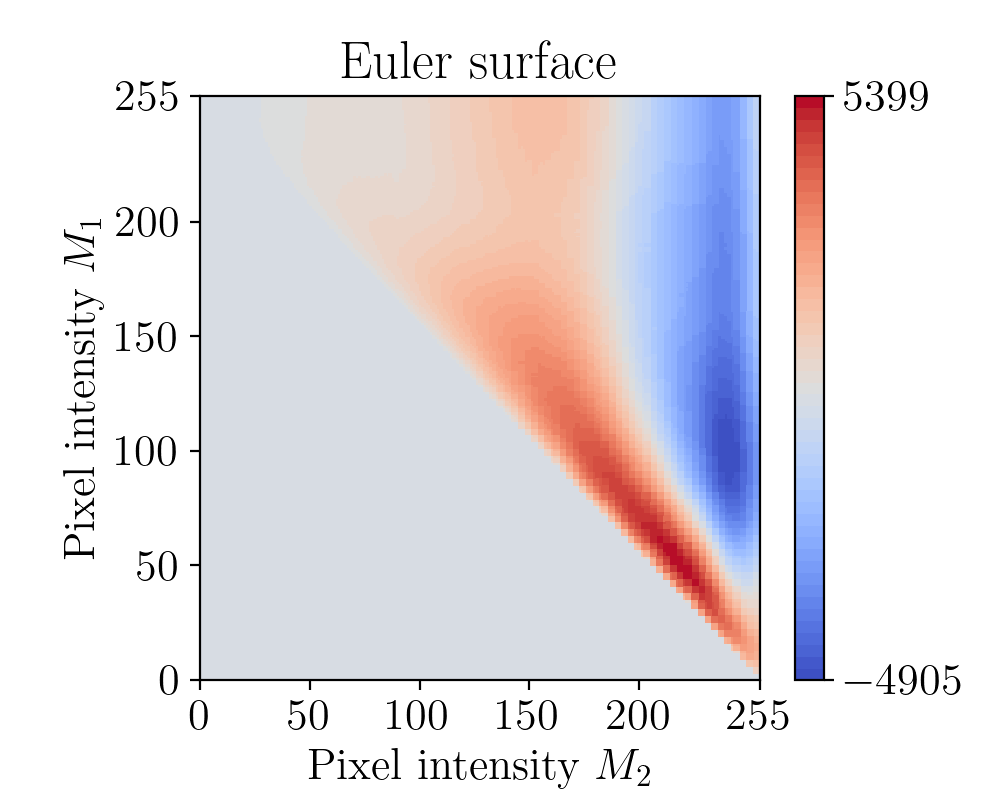}%
             \caption{ECS($M_1,M_2$)}
             \label{fig:ecs_complement}%
            \end{subfigure}
 
             \caption{Example of the ECS with $M_2$ being the complement image}
          \label{example_level_sets}
\end{figure}




\myparagraph{Radial gradient pipeline.}In Figure \ref{fig:ecs_radial_control}. 
 we can see an example of the resulting ECS from taking a Control image $M_1$ in Figure \ref{fig:ecs_control_rgi} and the second image $M_2$ in Figure \ref{fig:rgi} is the \textit{radial gradient image}. In this case, the choice of image $M_2$ is motivated by the idea to capture the FAZ in the OCTA images, as its enlargement is characteristic for the progression of DR \cite{freiberg2016optical}. By selecting a threshold $t$ for the radial gradient image, we consider the disk with radius $t$ and its intersection with the OCTA image. Thus, it is in theory possible to detect the FAZ, rendering the radial gradient image a suitable candidate for a second image $M_2$.


\begin{figure}[!htbp]
    \centering
 
\begin{subfigure}[b]{.32\textwidth}      
    \includegraphics[width=\textwidth]{images_OCTA/original_image_control_correct.png}
    \caption{Control image ($M_1$)}
    \label{fig:ecs_control_rgi}%
\end{subfigure}
\hfill
\begin{subfigure}[b]{.32\textwidth}      
    \includegraphics[width=\textwidth]{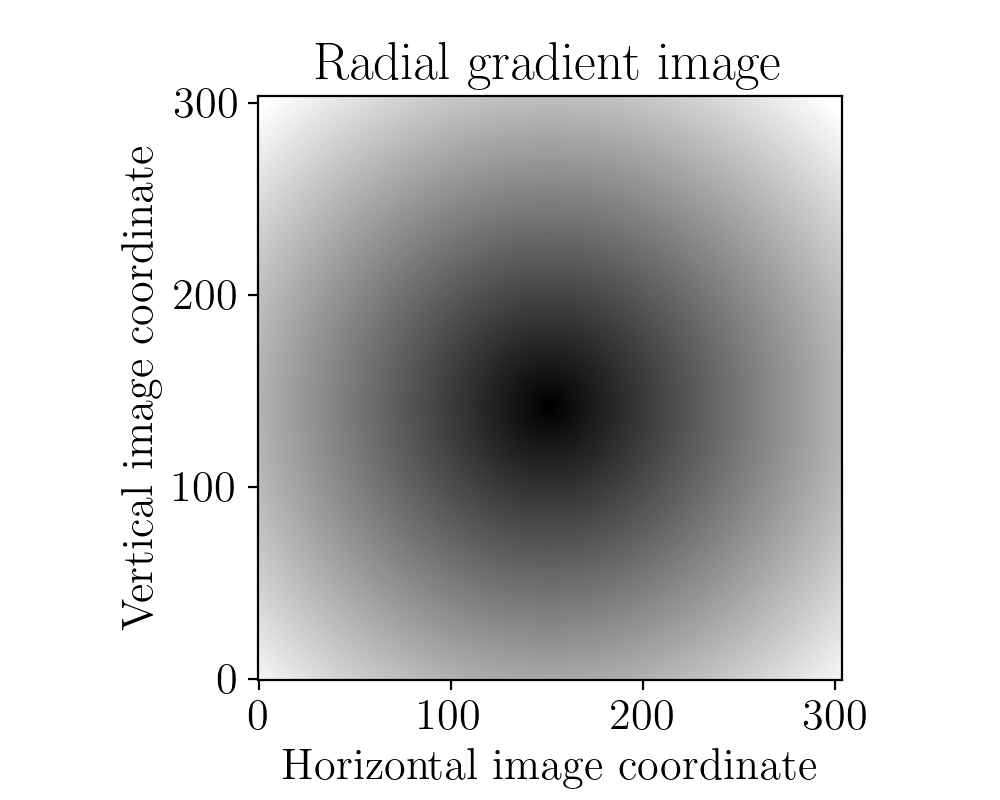}
    \caption{Radial gradient image ($M_2$)}
      \label{fig:rgi}%
\end{subfigure}
\hfill
\begin{subfigure}[b]{.32\textwidth}        
             \includegraphics[width=\textwidth]{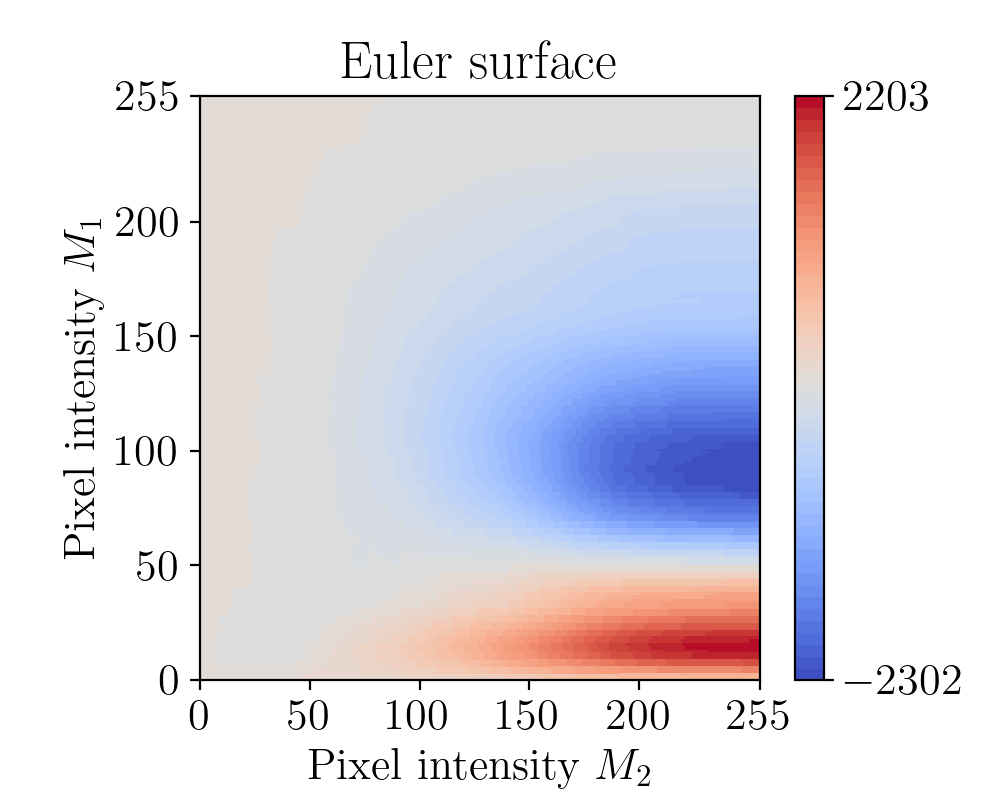}
             \caption{$\overline{\mathrm{ECS}}(M_1,M_2)$}
             \label{fig:ecs_radial_control}%
            \end{subfigure}
          \caption{Example of the ECS with $M_2$ being the radial gradient image}
          \label{fig:ecs_example}
\end{figure}



\myparagraph{Terrain.}The main insight is that there are areas on the ECS which are highly discriminatory between the two groups (Controls and DR). We compute the normalized terrains using all the images. Recall that this is constructed by comparing point-wise means and standard deviations of the EC. For OCTAGON, we identify 2 regions of interest in the level set \foo substitute in Figure \ref{fig:labelled_landscape_level} - regions A and B. For the radial gradient \foo substitute in Figure \ref{fig:labelled_landscape_radial} there is one region A which has a high density of red points. The \foo substitutes for NHS Lothian are similar, suggesting the existence of underlying and fundamental topological features, shared between datasets and well captured by the ECS.

In Figures~\ref{fig:octagon_examples_levelset} we see some examples of the images from the control and DR from Region B, i.e. the the thresholds corresponding to a point in Region B. The difference between the control and DR is visually clear which shows how the Euler surfaces can yield insights into the differences between classes. In  Figure~\ref{fig:octagon_examples_radial}, we show images for the sublevel set and radial filtration function. Here the difference is less obvious, however, the difference indicates that the shape of the FAZ (the circular region in the center which is devoid of blood vessels) becomes less circular and enlarged for DR.

\paragraph{Correlate \foo substitute regions with biomarkers.} To establish a link between the topological regions and the known biomarkers, we calculate the correlation between the EC of region B in the level set \foo substitute in Figure \ref{fig:labelled_landscape_level} and vessel density (VD) and the correlation between the EC in region A in the radial gradient \foo substitute in Figure \ref{fig:labelled_landscape_radial} and FAZ area. 

\begin{figure}[!htbp]
    \centering
    \begin{subfigure}[t]{.45\textwidth}      
    \includegraphics[width=\textwidth]{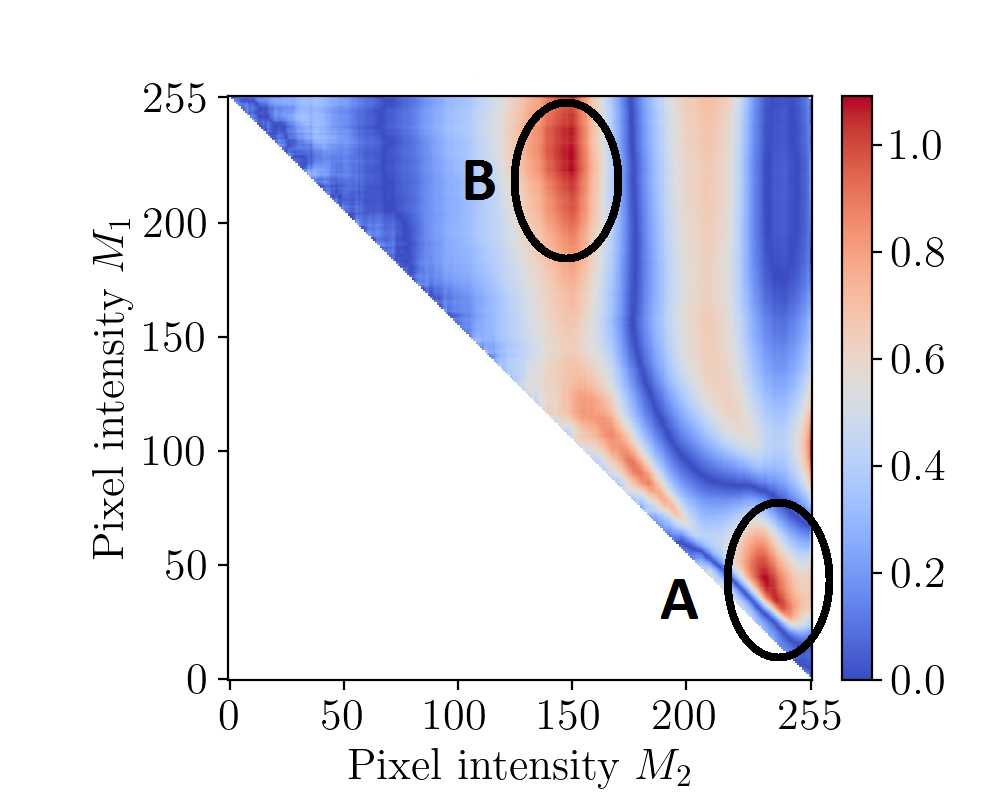}  
    \caption{Level set \foo substitute with regions A and B}
    \label{fig:labelled_landscape_level}
    \end{subfigure}
    \quad
    \begin{subfigure}[t]{0.45\textwidth}
    \includegraphics[width=\textwidth]{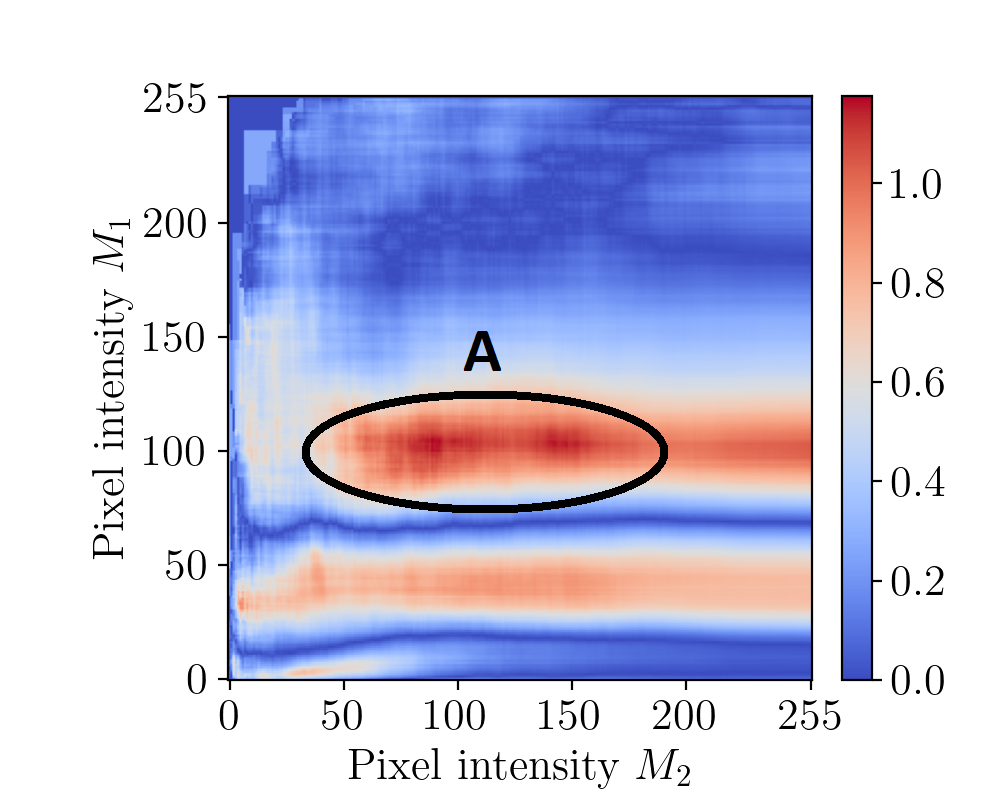}%
    \caption{Radial gradient \foo substitute with region of interest A}
    \label{fig:labelled_landscape_radial}
    \end{subfigure}
    \caption{Examples of regions of interest in the \foo substitutes of OCTAGON}
    \label{octagon_landscape_regions}
\end{figure}

\begin{table}[!htbp]
\begin{center}
\scalebox{0.9}{
\small\addtolength{\tabcolsep}{-1pt}
\begin{tabular}{c  c  c  c  c}
\hline
 & {\textbf{EC ($p$-value), NHS Lothian}} & {\textbf{EC ($p$-value), OCTAGON}}  \\ 

  \textbf{(VD,EC(VD))} &   $0.86 (1.65 \times 10^{-12})$ &  $ 0.20 (0.20) $  \\
  \textbf{(FAZ,EC(FAZ))} &   $0.55 (2.52 \times 10^{-4})$ & $0.57 (7.13 \times 10^{-5})$ \\
  \hline
\end{tabular}}
\caption{Correlation results for NHS Lothian and OCTAGON}
\label{tab:correlation_both_datasets}
\end{center}
\end{table}

\myparagraph{Correlation results.} We can see a summary of the correlation results in Table \ref{tab:correlation_both_datasets}. For FAZ area, the correlation results for NHS Lothian are consistent with the results for OCTAGON. Therefore, there is preliminary evidence that the method used is robust between datasets from a different device and that correlation between EC(FAZ) and FAZ area is maintained at the similar levels across devices. However, this is not the case for VD. For OCTAGON there is not significant correlation. This could be attributed to the fact that NHS Lothian on average has higher vessel valid visibility and less motion artifacts \cite{li2018quantitative}.

\begin{figure}[!htbp]
    \centering
    \begin{subfigure}{.24\textwidth}    
        \centering  
        \hspace*{-0.9cm}\includegraphics[width=1.4\textwidth]{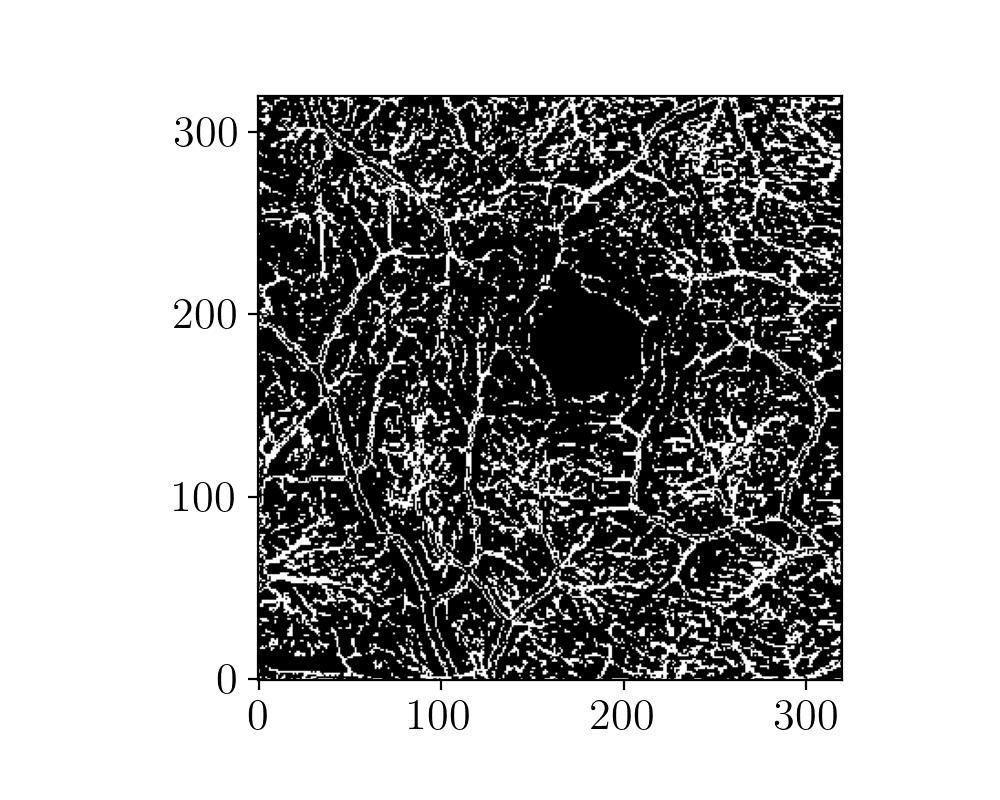}  
    \caption{}
    \end{subfigure}
    \begin{subfigure}{.24\textwidth}
        \centering \hspace*{-0.9cm}\includegraphics[width=1.4\textwidth]{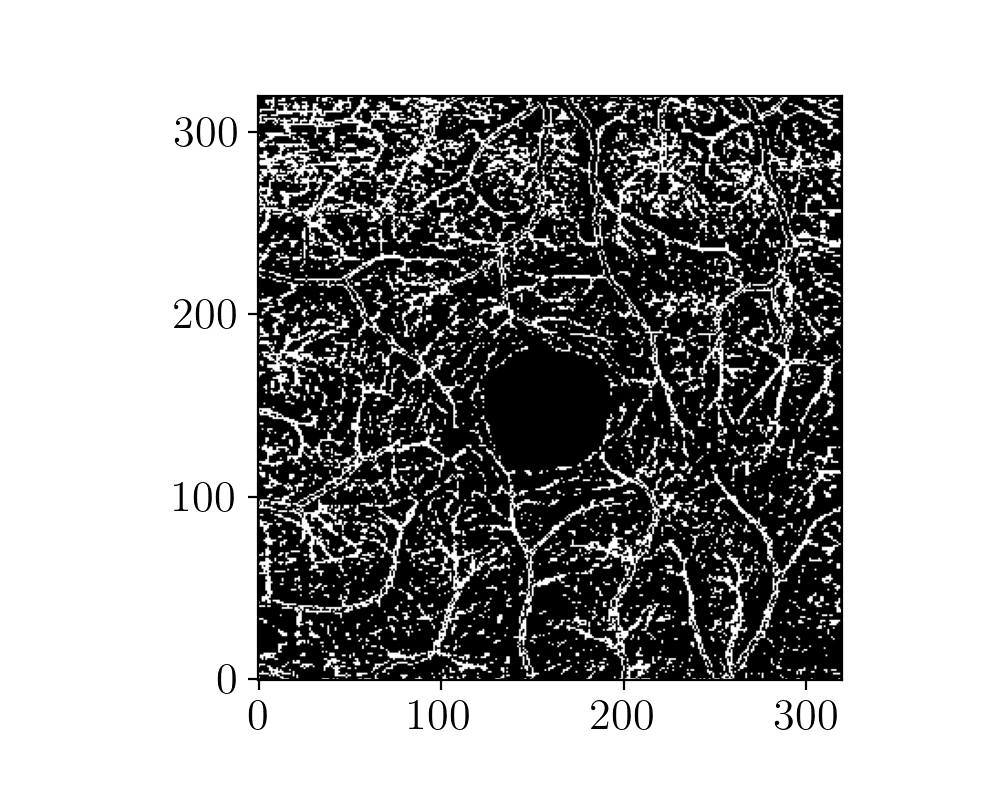}  
    \caption{}
    \end{subfigure}
    \begin{subfigure}{.24\textwidth}      
        \centering
        \hspace*{-0.9cm}\includegraphics[width=1.4\textwidth]{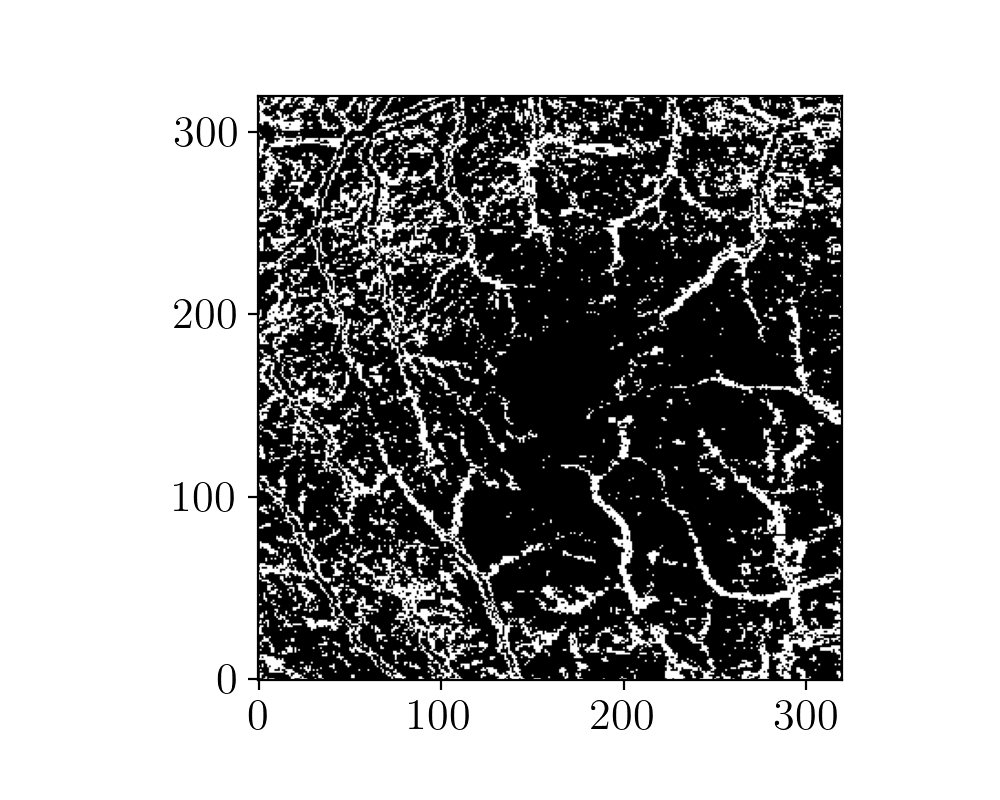}  
        \caption{}
        \end{subfigure}
    \begin{subfigure}{.24\textwidth}      
        \centering  \hspace*{-0.9cm}\includegraphics[width=1.4\textwidth]{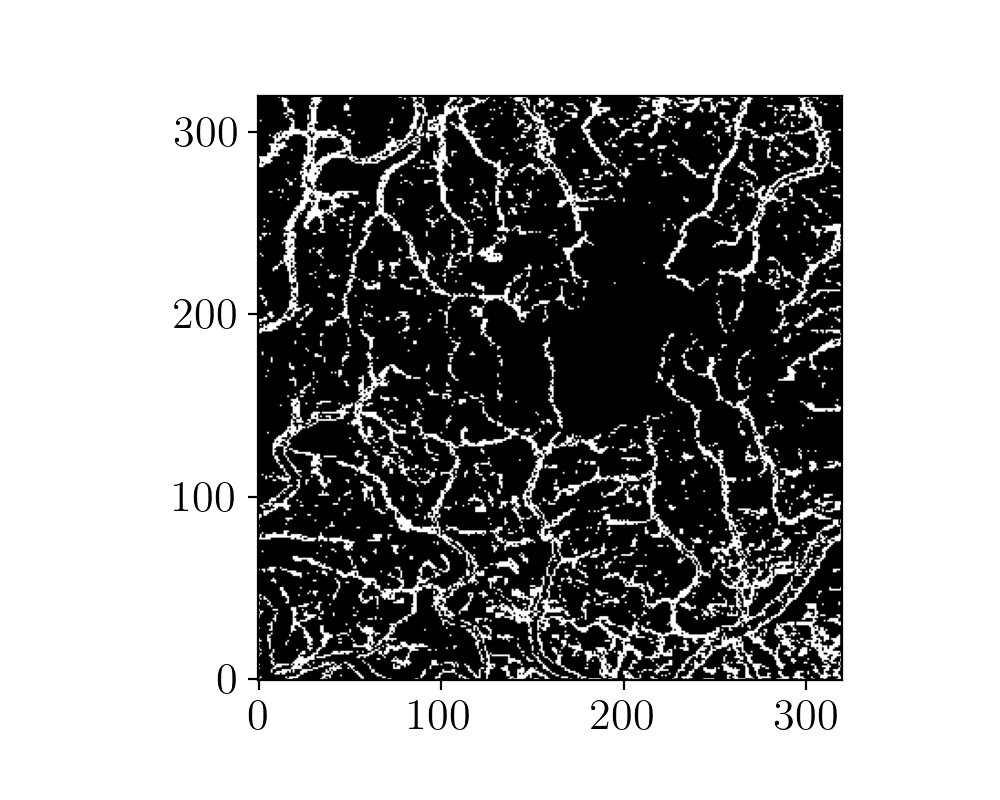}  
        \caption{}
        \end{subfigure}
    \caption{Examples of level-sets of images of the control (a,b) and  DR (c,d) corresponding to Region B.}
    \label{fig:octagon_examples_levelset}
\end{figure}

\begin{figure}[!htbp]
    \centering
    \begin{subfigure}{.24\textwidth}    
        \centering  
        \hspace*{-0.9cm}\includegraphics[width=1.4\textwidth]{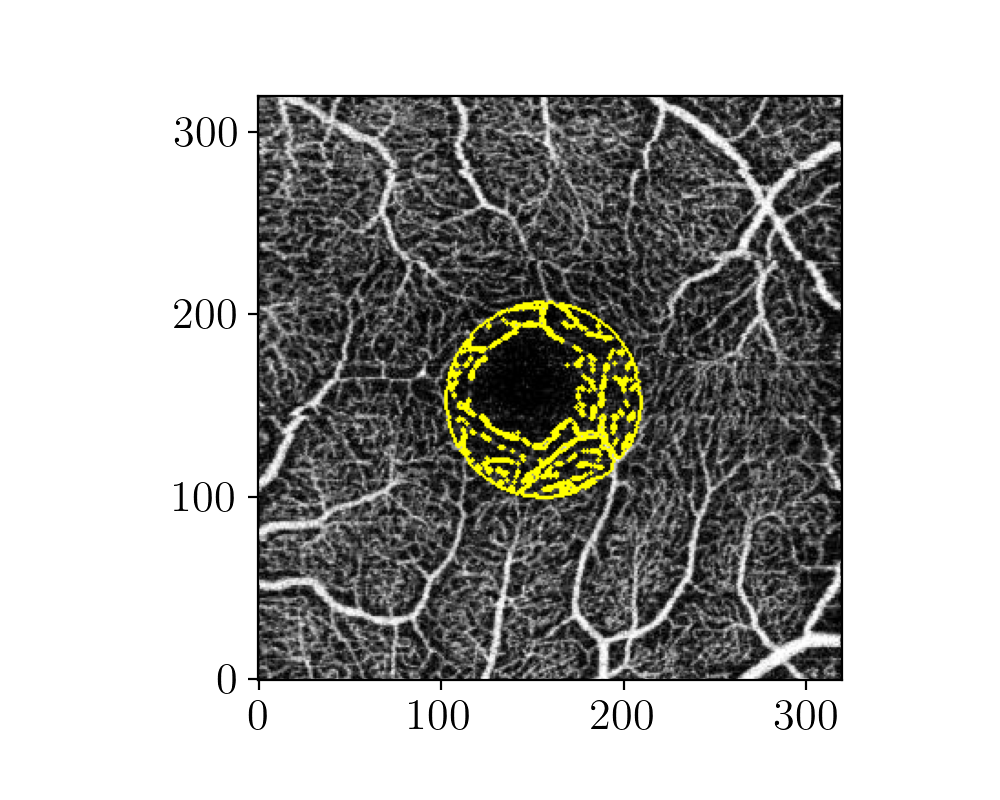}  
    \caption{}
    \end{subfigure}
    \begin{subfigure}{.24\textwidth}
        \centering \hspace*{-0.9cm}\includegraphics[width=1.4\textwidth]{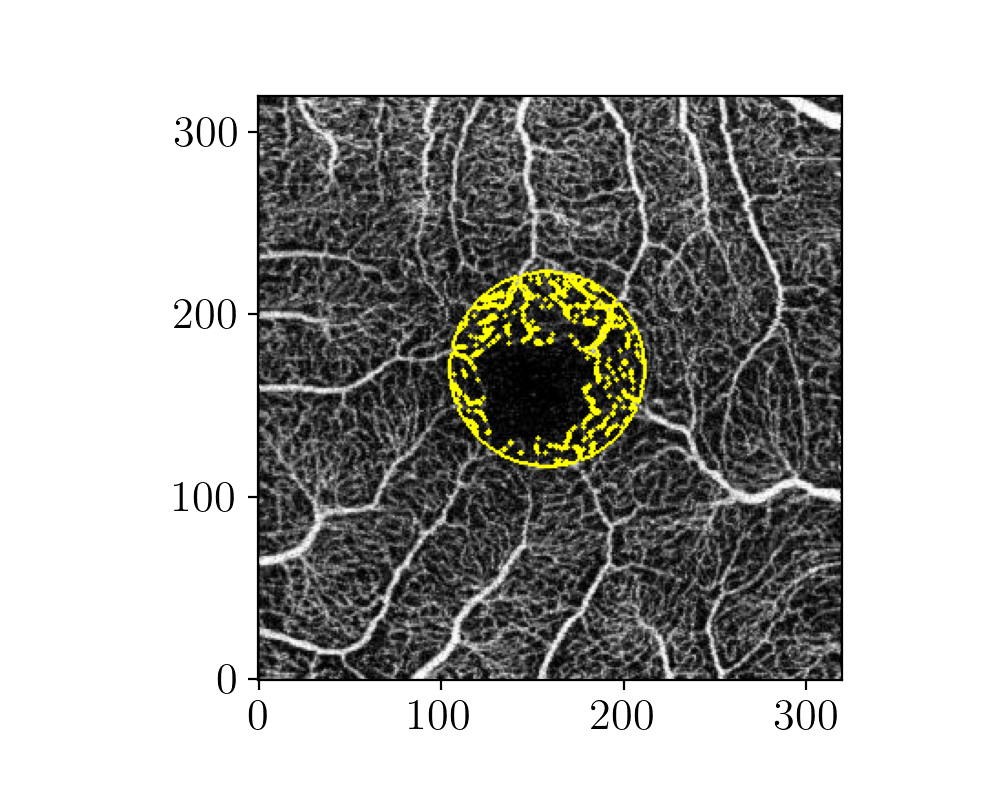}  
    \caption{}
    \end{subfigure}
    \begin{subfigure}{.24\textwidth}      
        \centering
        \hspace*{-0.9cm}\includegraphics[width=1.4\textwidth]{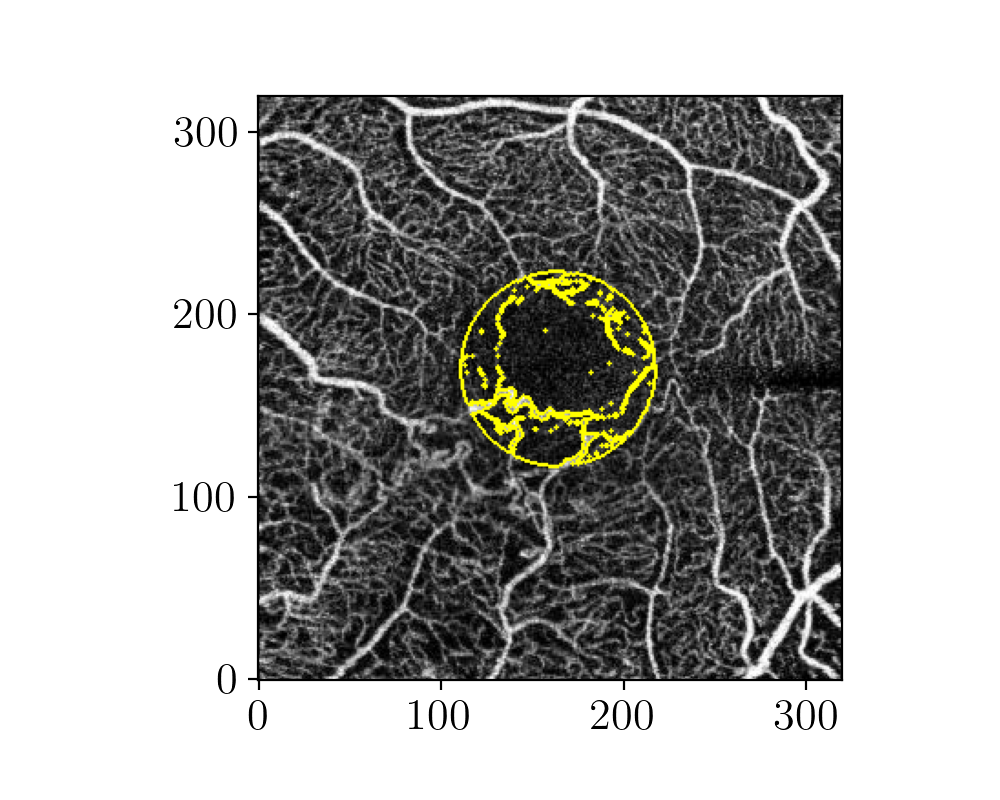}  
        \caption{}
        \end{subfigure}
       \begin{subfigure}{.24\textwidth}      
        \centering  \hspace*{-0.9cm}\includegraphics[width=1.4\textwidth]{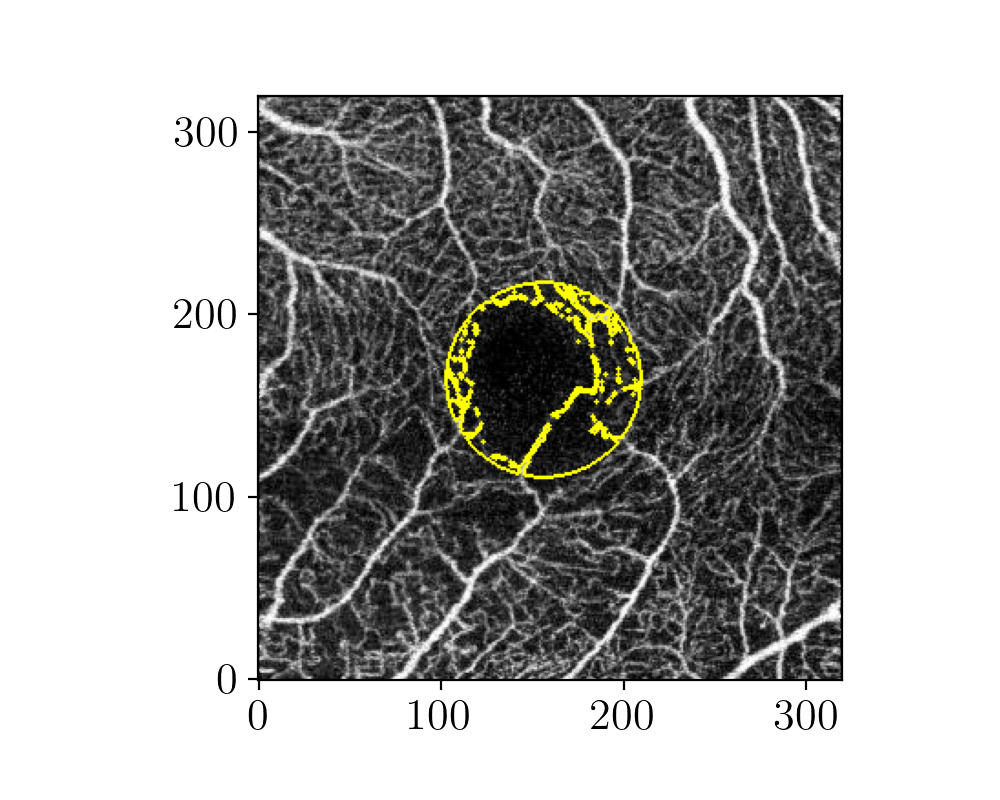}  
        \caption{}
        \end{subfigure}
    \caption{Examples of sublevel sets with the radial mask shown in yellow with the control shown in (a,b) and DR (c,d) corresponding to Region A.}
    \label{fig:octagon_examples_radial}
\end{figure}

\section{Discussion}
\label{sec:concl}
The Euler characteristic is perhaps the most ubiquitous topological invariant and is certainly one of the easiest to compute efficiently. Despite its simple nature, it often captures many interesting features of dataset, particularly when randomness is involved, where it is one of the few cases where closed-form expressions exist. While the Euler characteristic curve has been used extensively in applications for some time now, multi-parameter analogues have not been explored. Here we began this exploration, as the general interest in multi-parameter persistence theory shows, the need to study higher-dimensional parameter spaces is becoming increasingly. 

Here we showed that considering these Euler surfaces are useful as features for classification tasks, but also for the kind of qualitative analysis that is one of the benefits of TDA. The identification of ranges of parameter values which differentiate models is a very interesting observation for data analysis.  These promising results also naturally lead to interesting mathematical questions which have yet to be explored. 

\begin{enumerate}
    \item Can one do interesting analysis on the Euler surfaces. While we have shown the particular results, will a clustering algorithm run on Figure ~\ref{octagon_landscape_regions}, always yield sensible and more importantly meaningful results? Furthermore, are there interesting interpretations of multiple regions of the Euler surface providing good differentiation? 
    \item Our current approach to the Euler surfaces are point-wise, however, can a more functional approach yield additional insights. In particular, are there basis functions which accurately capture the ``shape" of the curves? In future work, we plan to examine the result of techniques such as functional PCA. 
    \item As mentioned, the Euler surfaces can be thought of as something of a sample or a subset from the Euler characteristic transform (ECT).  An active area of research is to understand how to evaluate and characterize the stability of the ECT. While Euler surfaces are not a solution to this -- the identification of differentiating parameter ranges may be one approach to this problem. A completely open question remains how to interpret this and understand this phenomenon within the broader theory of constructible sheaf transforms.
    \item We often would like to encode certain invariances into an analysis, e.g. rotational invariance. In the case of Euler surcase this can be done by the choice of filtering function, as is the case for distance from the center. A natural extension of this, is can these invariances be extended to the ECT, so that they remain invertible up to an invariance. This seems likely but remains an open question.
\end{enumerate}

\bibliographystyle{unsrt}  
\bibliography{euler}  

\end{document}